\newtheorem{theorem}{Theorem}
\newtheorem{definition}{Definition} 
\newtheorem{remark}{Remark}
\newtheorem{prop}{Proposition}
\newtheorem{lemma}{Lemma}
\def\R{\mathbb R}
\def\cFE{{\cal F}_{\rm E}}
\def\cF{{\cal F}_{\rm BE}}
\def\cGbe{{\cal G}_{\rm BE}}
\def\cGE{{\cal G}_{\rm E}}
\def\cgE{{\rm g}_{\rm E}}
\def\cgBE{{\rm g}_{\rm BE}}
\def\cN{{\cal N}}
\def\cR{{\cal R}}
\def\tR{\tilde{R}}
\def\tv{\tilde{v}}
\def\tg{\tilde{g}}
\def\tM{\tilde{M}}
\def\tPi{\tilde{\Pi}}
\def\tcF{\tilde{\cal F}_{\rm BE}}
\def\tcFE{\tilde{\cal F}_{\rm E}}
\def\hRbe{\hat{R}_{\rm BE}}
\def\hRE{\hat{R}_{\rm E}}
\def\eps{\epsilon}
\def\vep{\epsilon}
\def\beq{\begin{equation}}
\def\eeq{\end{equation}}
\begin{document}

\title{Asymptotic Behaviour of Time Stepping Methods for Phase Field Models} 

\author{
Xinyu Cheng 
\thanks{xycheng@math.ubc.ca, Department of
Mathematics, University of British Columbia, Vancouver, B.C.
Canada V6T 1Z2}
\and Dong Li
\thanks{madli@ust.hk, Department of
Mathematics, Hong Kong University of Science and Technology, 
Clearwater Bay, Kowloon, Hong Kong}
\and Keith Promislow
\thanks{kpromisl@math.msu.edu, 
Department of Mathematics, 
Michigan State, East Lansing, 48864 USA}
\and Brian Wetton
\thanks{wetton@math.ubc.ca, Department of
Mathematics, University of British Columbia, Vancouver, B.C.
Canada V6T 1Z2}
}

\maketitle

\begin{abstract}
Adaptive time stepping methods for metastable dynamics of the Allen Cahn and Cahn Hilliard equations are investigated in the spatially continuous, semi-discrete setting. 
We analyse the performance of a number of first and second order methods, formally predicting step sizes required to satisfy specified local truncation error $\sigma$ in the limit of small order parameter $\epsilon \rightarrow 0$ during meta-stable dynamics. The formal predictions are made under stability assumptions that include the preservation of the asymptotic structure of the diffuse interface, a concept we call profile fidelity.
In this setting, definite statements about the relative behaviour of time stepping methods can be made. 
Some methods, including all so-called energy stable methods but also some fully implicit methods, require asymptotically more time steps than others. 
The formal analysis is confirmed in computational studies.
We observe that some provably energy stable methods popular in the literature perform worse than some more standard schemes.  
We show further that when Backward Euler is applied to meta-stable Allen Cahn dynamics, the energy decay and profile fidelity properties for these discretizations are preserved for much larger time steps than previous analysis would suggest. The results are established asymptotically for general interfaces, with a rigorous proof for radial interfaces. It is shown analytically and computationally that for most reaction terms, Eyre type time stepping performs asymptotically worse due to loss of profile fidelity.
\end{abstract}

\noindent {\bf Keywords:} Allen-Cahn Equation, Allen-Cahn Equation, Phase Field Model, Time Stepping, Energy Stability.

\section{Introduction}
\label{s:introduction}

The mathematical literature for computational methods for Allen-Cahn (AC) dynamics \cite{AC}, and its higher order relative Cahn Hilliard (CH) dynamics \cite{CH}, is dominated by the proposal, use, and analysis of so-called energy stable schemes \cite{energy1,shen1,wise10,qiao1}. AC and CH dynamics are gradient flows on an energy functional, and the solution should decrease that energy in time. Energy stable schemes guarantee that decrease no matter what time step is chosen. This is a desirable property not shared by standard fully implicit or semi-implicit (IMEX) time stepping methods. 
We will show in this work that some (but not all) fully implicit methods can outperform energy stable schemes when subject to fixed accuracy requirements. 
The recent article \cite{Xu} gives especially clear evidence that when time steps are chosen appropriately, fully implicit methods are conditionally energy stable, and further that the large time steps allowed by energy stable schemes can come at the cost of significant loss of accuracy. 
We extend the commentary in \cite{Xu} to show that in the metastable dynamic regime of AC and CH, some fully implicit methods can take optimally sized time steps. By optimal, we mean the asymptotically largest time steps as the order parameter $\epsilon \rightarrow 0$ that satisfy a given local error tolerance. 
Here, $\epsilon$ represents the width of interfacial layers in metastable dynamics and, like the authors of \cite{Xu}, we use the form of the equations scaled so that these dynamics transpire in an $O(1)$ time scale. When the dynamics are in this metastable regime, which dominates the 
time of typical phenomena of interest, definite statements about the behaviour of different time stepping methods can be made. This criteria does not take into account solver efficiency. However, we can make definite statements on how efficient solvers for nonlinear implicit time stepping need to be to outperform other methods.  

A combination of asymptotic analysis and careful computational work backs up our claims. In addition, we present a rigorous result for implicit time stepping for meta-stable AC dynamics in radial geometry that shows that asymptotically larger time steps can be taken than previous analysis would suggest. These time steps preserve the diffuse interface structure (a property that we call proflie fidelity) and also the energy decay property of the equations. This result is shown for a class of reaction terms. An interesting result in Section~\ref{s:Eyre_Type} shows that Eyre-type time stepping can perform asymptotically worse with most reaction terms, while implicit time stepping has uniform asymptotic behaviour over a class of reaction terms. This was predicted by the analysis and confirmed computationally. 

Our study focuses on pure materials science applications rather than the use of Cahn-Hilliard equations to track interfaces in so-called {\em diffuse interface methods} \cite{diffuse1} in which the CH dynamics are coupled to other physics. We consider the simplest form of AC and CH dynamics, whose Gamma limit (as $\epsilon \rightarrow 0$) is well understood and use that well known structure to gain insight into the behaviour of the schemes. The authors believe that the insight gained from these studies will also apply to schemes used for other materials science models which are less well understood. 

We consider a number of first and second order time stepping schemes: the energy stable Eyre's method \cite{eyre}; Backward (Implicit) Euler (BE) \cite{hairer}; Trapezoidal Rule (TR) \cite{hairer}; Second order Backward Differentiation Formula (BDF2) \cite{hairer}; Secant \cite{secant}; standard semi-implicit (linear IMEX) methods of first and second order \cite{imex}; first and second order Scalar Auxiliary Methods (SAV) \cite{SAV} for which a modified energy stability can be proved; and finally a second order Singular Diagonally Implicit Runge Kutta method with good stability properties (DIRK2) \cite{hairer}. The resulting implicit systems are considered in the spatially continuous semi-discrete setting in a 2D periodic domain, with numerical validation done with a suitably refined Fourier spectral approximation. Time step schemes that result in nonlinear systems are solved with Newton's iterations using the Preconditioned Conjugate Gradient Solver (PCG) developed in \cite{energy} at each iteration. Adaptive time stepping is done based on a user-specified local error tolerance $\sigma$. The variation of the number of time steps with $\epsilon$ for fixed $\sigma$ is predicted based on formal consideration of the local truncation error of the schemes in the metastable dynamics. The formal predictions are then validated in computational studies. With this criteria, first order BE performs better (asymptotically fewer time steps as $\epsilon \rightarrow 0$) than Eyre and first order IMEX and SAV. Second order TR and BDF2 perform better than Secant, DIRK2, and second order IMEX and SAV. The difference in both cases is asymptotically larger for CH than AC. These comparisons are also valid for computational time, using PCG counts as the measure, to similar accuracy. It is seen that optimal numbers of time steps are obtained when the dominant local truncation error is a higher order time derivative. This observation may have application in other systems with metastable dynamics. We observe that standard IMEX methods perform almost identically to SAV methods of the same order in the scenario we consider, at reduced computational cost. 

It is observed that the global accuracy of BE is better than a na\"ive prediction based on the size of the local truncation error would suggest. A formal analysis of the scheme for the AC case shows that the dominant error made in one time step is asymptotically smaller than expected. This is due to a special structure of the local truncation error for BE, in which the asymptotically largest term lies in a strongly damped space. 

We introduce the equations and numerical schemes in Section~\ref{s:equations} with some introductory analysis. The scaling for AC and CH is chosen so that the metastable interface dynamics (approximate curvature motion for AC and Mullins-Sekerka flow \cite{mullins} for CH) occurs in $O(1)$ time. In Section~\ref{s:meta} we examine the metastable dynamics of the equations and make predictions for the behaviour of the time steps with $\epsilon$ and local error tolerance $\sigma$ under stability assumptions which are verified numerically in Section~\ref{s:comp}. We give an asymptotic analysis for the surprising accuracy and stability properties for BE with large time steps applied to AC in Section~\ref{s:accuracy}. In Section~\ref{s:radial_main} we present the rigorous result for BE applied to AC with large time steps and also show the loss of profile fidelity for Eyre-type time stepping for most reaction terms. We end with a short discussion. 

\section{Equations and Schemes}  
\label{s:equations} 

We consider the simplest form of the AC dynamics for $u({\bf x},t)$ given by 
\begin{equation}
\label{eq:AC}
u_t = \Delta u - \frac{1}{\epsilon^2} f(u)
\end{equation}
where $f(u) = u^3-u$ is the classical form of the reaction term. More general reaction terms are considered in Section~\ref{s:radial_main}. CH dynamics is described by a higher order partial differential equation 
\begin{equation}
\label{eq:CH}
u_t = -\epsilon \Delta \Delta u + \frac{1}{\epsilon} \Delta f(u).
\end{equation}
For computational simplicity, we consider the two-dimensional (2D) cases of these equations in a doubly periodic cell $[0, 2 \pi]^2$. 
The time scaling in the equations above is chosen to give sharp interface (as $\epsilon \rightarrow 0$) motion in $O(1)$ time.  The sharp interface limit yields curvature driven flow for AC and a nonlocal Mullins-Sekerka flow for CH \cite{mullins}.  
Both types of dynamics have an associated energy functional 
\begin{equation}
\label{eq:energy}
{\mathcal E} = \int \left( |\nabla u|^2/2 + W(u)/\epsilon^2 \right)
\end{equation}
where $W(u) = \frac{1}{4} (u^2 -1)^2$ and the reaction term $f(u) = W^\prime (u)$. The energy $\mathcal E (t)$ is monotonic decreasing due to the gradient flow nature of the dynamics. For AC the gradient is in $L_2$ and for CH it is $H^{-1}$. 

\subsection{Time stepping} 
\label{s:time}

\subsubsection{Backward Euler}

We consider the simplest implicit scheme, first order Backward Euler (BE), also known as Implicit Euler. Applied to (\ref{eq:AC}) keeping space continuous, we have 
\[
\frac{u_{n+1}-u_n}{k_n}=\Delta u_{n+1}-\frac{1}{\epsilon^2} f(u_{n+1})\ .
\]
where $u_{n} ({\bf x})$ approximates the exact solution $u({\bf x}, t_n)$ and $k_n = t_{n+1} - t_n$ is the time step. We use the classical $f(u) = u^3 -u$ as mentioned above. Dropping the subscript on the time step and the unknown solution at time level $n+1$ we have the nonlinear problem 
\begin{equation}
\label{eq:BE}
u - k \Delta u + \frac{k}{\epsilon^2} f(u) = u_n
\end{equation}
for $u$ given $u_n$. 

\begin{definition} A time stepping scheme is said to have the {\em energy decay property} if ${\mathcal E} (u_{n+1}) \leq {\mathcal E} (u_n)$.
\end{definition}
This property could be conditional on the choice of time step size. Additionally, it could depend on $u_n$. If a scheme has the energy decay property for any  $u_n$ and $k$, the scheme is called {\em unconditionally energy stable}.  

\begin{theorem} Consider  \eqref{eq:BE}, assume that $u_n\in H^2(\Omega)$ and $u_n$ takes values in $[-1,1]$, then there exists $u \in H^2(\Omega)$ that solves \eqref{eq:BE} with values in $[-1,1]$. 
Define $f_\infty:=\max\{|f'(s)|,\, s\in[-1,1]\}$, then if $k\leq  2 \epsilon^2/f_\infty$ the solution $u$ is unique and satisfies the energy decay property. Note that the energy stability result was established earlier in \cite{Xu} with a different proof. 
\end{theorem}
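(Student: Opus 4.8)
The plan is to realize the Backward Euler step as a variational problem: \eqref{eq:BE} is precisely the Euler--Lagrange equation of the regularized energy
\[
J(u):=\frac{1}{2k}\int_\Omega|u-u_n|^2+\mathcal E(u)=\int_\Omega\left(\frac{|u-u_n|^2}{2k}+\frac{|\nabla u|^2}{2}+\frac{W(u)}{\epsilon^2}\right).
\]
On the periodic space $H^1(\Omega)$ the functional $J$ is non-negative, coercive (the first term controls $\|u\|_{L^2}$ up to $\|u_n\|_{L^2}$, the second controls $\|\nabla u\|_{L^2}$), and weakly lower semicontinuous: the first two terms are convex and continuous, and $u\mapsto\int_\Omega W(u)$ is weakly continuous on $H^1(\Omega)$ by the two-dimensional compact embedding $H^1(\Omega)\hookrightarrow L^4(\Omega)$ together with the quartic growth of $W$. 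The direct method thus produces a minimizer $u\in H^1(\Omega)$, which satisfies the weak form of \eqref{eq:BE}; since then $(I-k\Delta)u=u_n-\tfrac{k}{\epsilon^2}f(u)\in L^2(\Omega)$, elliptic regularity upgrades it to $u\in H^2(\Omega)$.

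To confine $u$ to $[-1,1]$ I would truncate. Let $Tu:=\max(-1,\min(1,u))$ be the pointwise projection onto $[-1,1]$. Then $|\nabla(Tu)|\le|\nabla u|$ a.e.; $W(Tu)\le W(u)$ pointwise, since $W\ge0$ and $W(\pm1)=0$; and, because $u_n$ takes values in $[-1,1]$ and the projection onto $[-1,1]$ is $1$-Lipschitz and fixes $[-1,1]$, also $|Tu-u_n|\le|u-u_n|$, with strict inequality a.e.\ on $\{|u|>1\}$. Hence $J(Tu)\le J(u)$, and comparing with the minimality of $u$ forces $|\{|u|>1\}|=0$, i.e.\ $u\in[-1,1]$ a.e.

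For uniqueness when $k\le 2\epsilon^2/f_\infty$, let $u_1,u_2$ be two $[-1,1]$-valued solutions and set $w:=u_1-u_2$, so $w-k\Delta w+\tfrac{k}{\epsilon^2}(f(u_1)-f(u_2))=0$. Testing with $w$ and integrating by parts on $\Omega$,
\[
\|w\|_{L^2}^2+k\|\nabla w\|_{L^2}^2+\frac{k}{\epsilon^2}\int_\Omega\bigl(f(u_1)-f(u_2)\bigr)(u_1-u_2)=0.
\]
The algebraic input is the identity, valid for all real $a,b$,
\[
\bigl(f(a)-f(b)\bigr)(a-b)=\bigl(a^2+ab+b^2-1\bigr)(a-b)^2\ \ge\ -(a-b)^2,
\]
since $a^2+ab+b^2=(a+\tfrac{b}{2})^2+\tfrac{3}{4}b^2\ge0$; as $f=u^3-u$ has $f_\infty=2$, this is exactly $(f(a)-f(b))(a-b)\ge-\tfrac{f_\infty}{2}(a-b)^2$. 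Substituting gives $\bigl(1-\tfrac{kf_\infty}{2\epsilon^2}\bigr)\|w\|_{L^2}^2+k\|\nabla w\|_{L^2}^2\le0$, so $w\equiv0$ when $k<2\epsilon^2/f_\infty$; at the endpoint $k=2\epsilon^2/f_\infty$ one first gets $\nabla w\equiv0$ and then, inserting the constant $w$ back into the difference equation, $w\equiv0$ again.

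For energy decay, write $\mathcal E(u)-\mathcal E(u_n)=\int_\Omega[\tfrac{1}{2}(|\nabla u|^2-|\nabla u_n|^2)+\tfrac{1}{\epsilon^2}(W(u)-W(u_n))]$. Use the exact identity $\tfrac{1}{2}|\nabla u|^2-\tfrac{1}{2}|\nabla u_n|^2=\nabla u\cdot\nabla(u-u_n)-\tfrac{1}{2}|\nabla(u-u_n)|^2$ for the gradient part, and second-order Taylor expansion with Lagrange remainder for $W$ (legitimate since $u,u_n\in[-1,1]$, hence the intermediate point lies in $[-1,1]$, where $|W''|\le f_\infty$) to get $W(u)-W(u_n)\le f(u)(u-u_n)+\tfrac{f_\infty}{2}(u-u_n)^2$. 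Integrating and using the equation in the form $-\Delta u+\tfrac{1}{\epsilon^2}f(u)=(u_n-u)/k$ to replace $\int_\Omega(-\Delta u+\tfrac{1}{\epsilon^2}f(u))(u-u_n)=-\tfrac{1}{k}\|u-u_n\|_{L^2}^2$ yields
\[
\mathcal E(u)-\mathcal E(u_n)\ \le\ \Bigl(\frac{f_\infty}{2\epsilon^2}-\frac{1}{k}\Bigr)\|u-u_n\|_{L^2}^2-\frac{1}{2}\|\nabla(u-u_n)\|_{L^2}^2\ \le\ 0
\]
exactly when $k\le2\epsilon^2/f_\infty$. The step I expect to demand the most care is the $L^\infty$ bound in the existence part: the truncation must be arranged so that none of the three terms of $J$ increases under projection onto $[-1,1]$ — this is where the double-well shape of $W$ and the hypothesis $u_n\in[-1,1]$ enter essentially — together with the weak continuity of $\int_\Omega W(u)$, which genuinely rests on the two-dimensional compact embedding $H^1\hookrightarrow L^4$. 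Securing the sharp constant $2\epsilon^2/f_\infty$ in the uniqueness claim (rather than $\epsilon^2/f_\infty$) is the other delicate point, and it hinges on the cubic-specific factorization above rather than the generic Lipschitz estimate for $f$.
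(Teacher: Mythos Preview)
Your argument is correct and differs from the paper's in two of the three parts. For existence and the $[-1,1]$ confinement, the paper invokes the method of sub/super-solutions with barriers $\pm1$, which delivers existence and the range bound simultaneously via the comparison principle; your variational route via the direct method plus truncation is equally valid, though it leans on the 2D compact embedding $H^1\hookrightarrow L^4$ (the paper's sub/super-solution argument is dimension-insensitive). For uniqueness, the paper writes the difference equation as $[1+\tfrac{k}{\epsilon^2}f'(s)-k\Delta]w=0$ and appeals to the maximum principle, which only yields uniqueness for $k<\epsilon^2/f_\infty$ --- strictly weaker than the theorem's stated threshold $2\epsilon^2/f_\infty$. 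Your $L^2$ testing argument with the cubic-specific factorization $(f(a)-f(b))(a-b)=(a^2+ab+b^2-1)(a-b)^2\ge -(a-b)^2$ actually reaches the sharp constant $2\epsilon^2/f_\infty$ and so matches the theorem as stated; you are right that this relies on the particular $f(u)=u^3-u$ rather than a generic Lipschitz bound. For energy decay the two proofs are essentially identical: both test \eqref{eq:BE} against $u-u_n$ and control the $W$-increment by the second-order Taylor remainder $\tfrac{f_\infty}{2}(u-u_n)^2$.
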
 
\begin{proof}
The existence of $u$ follows from the standard method of sub-/super-solutions applied to comparison functions $-1$ and $+1$. To establish uniqueness,
we assume $u_1$ and $u_2$ are solutions. Then their difference $w=u_1-u_2$ is a solution of
\begin{equation*}
\begin{aligned}
(1-k\Delta )w=-k\cdot \frac{f(u_1)-f(u_2)}{\epsilon^2}=-\frac{k}{\epsilon^2}\cdot f'(s(x)) w\ ,
\end{aligned}
\end{equation*}
where $s$ takes values between $u_1$ and $u_2$, and hence in $[-1,1]$.
Isolating $w$ leads to the elliptic problem
\begin{equation*}
\begin{aligned}
\left[1+\frac{k f'(s)}{\epsilon^2}-k\Delta \right]w=0,
\end{aligned}
\end{equation*}
and if $k < \epsilon^2/f_\infty $ then the corresponding elliptic operator is strictly positive and $w$ is zero by the maximum principle. 
To establish energy decay, we take the inner product of (\ref{eq:BE}) with the test function $u-u_n$:
\begin{equation*}
\begin{aligned}
\frac{1}{k}\int |u-u_n|^2+\frac12\int\left(|\nabla u|^2-|\nabla u_n|^2+|\nabla u-\nabla u_n|^2\right)=
-\frac{1}{\epsilon^2}\left(f(u),(u-u_n)\right)\ .
\end{aligned}
\end{equation*}
From the Fundamental Theorem of Calculus we develop the expansion,
\begin{equation*}
|F(u)-F(u_n) - f(u)(u-u_n)| =  \left|\int_u^{u_n}\ f'(s)(s-u_n)\ ds\right|\leq  \frac{f_\infty }{2}(u-u_n)^2. \\
\end{equation*}
Using this relation to eliminate  $f(u)$ yields the equality,
\begin{equation*}
(\frac1k-\frac{f_\infty}{2\epsilon^2}) \int |u-u_n|^2+ E[u]-E[u_n] \leq 0,
\end{equation*}
which yields energy decay for $k<2\epsilon^2/f_\infty$.
The Theorem is also true when homogeneous Neumann boundary conditions are specified.
\end{proof}
Thus we have existence of solutions to (\ref{eq:BE}) for any time step size, and uniqueness and energy stability under the resitriction $k \leq 2 \epsilon^2/f_\infty$. This is true for any $u_n$ under the restrictions of the Theorem. We shall see in Section~\ref{s:radial_main} that asympoticaly larger time steps $k = o(\epsilon)$ can be taken when the dynamics are slow (interface motion) with locally unique, energy stable solutions. This is verified in computational tests.  

\subsubsection{Eyre's Method} 

An alternative first order scheme to fully implicit BE was proposed by Eyre \cite{eyre}:
\begin{equation}
\label{eq:eyreAC}
u - k \Delta u + \frac{k}{\epsilon^2} u^3 = u_n+ \frac{k}{\epsilon^2} u_n 
\end{equation}
The scheme is derived conceptually by keeping a convex part of the reaction term $f(u) = u^3 - u$ implicit and a concave part explicit. In this sense, it is an IMEX method but an unusual one since a nonlinear term is kept implicit and a linear term is handled explicitly. The method has appealing properties:
\begin{theorem}[from \cite{eyre}] 
The time step (\ref{eq:eyreAC}) has a unique solution $u$ for any $u_n$ and $k$ that is unconditionally energy stable. 
\end{theorem}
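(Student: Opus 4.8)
The plan is to exploit the convex--concave (Eyre) splitting of the energy that motivates the scheme. Write $W(u)=W_c(u)-W_e(u)$ with $W_c(u)=\tfrac14u^4+\tfrac14$ and $W_e(u)=\tfrac12u^2$, both convex, so that $W_c'(u)=u^3$, $W_e'(u)=u$ and $f=W_c'-W_e'$. Correspondingly $\mathcal E=\mathcal E_c-\mathcal E_e$ with $\mathcal E_c(u)=\int\big(|\nabla u|^2/2+W_c(u)/\epsilon^2\big)$ and $\mathcal E_e(u)=\int W_e(u)/\epsilon^2$ each convex on $H^1$ of the periodic cell, and (\ref{eq:eyreAC}) is exactly the stationarity condition for
\[
J(u):=\frac{1}{2k}\|u-u_n\|_{L^2}^2+\mathcal E_c(u)-\frac1{\epsilon^2}\int u_n\,u ,
\]
i.e. the convex part of the energy is evaluated implicitly at the new level and the linearization of the concave part at the old level.

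For existence and uniqueness I would argue variationally on the periodic domain: $J$ is strictly convex (the term $\tfrac1{2k}\|u-u_n\|_{L^2}^2$ is strictly convex, $\mathcal E_c$ convex, the last term linear), coercive on $H^1$ (the two quadratic terms control $\|u\|_{H^1}^2$, while $\int W_c(u)\ge0$ and the linear term is dominated by the $L^2$ term), and weakly lower semicontinuous; hence it admits a unique minimizer $u\in H^1$, which is the unique critical point by strict convexity. Since $H^1\hookrightarrow L^p$ for every finite $p$ in two dimensions, $W_c(u)\in L^1$ and $u^3\in L^2$, so from $\Delta u=\tfrac1k(u-u_n)+\tfrac1{\epsilon^2}(u^3-u_n)\in L^2$ elliptic regularity gives $u\in H^2$. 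Because there is no smallness assumption on $k$, the contraction-mapping route is unavailable, and it is precisely this monotone/variational structure that makes the argument work for every $k>0$; that bookkeeping for the quartic nonlinearity in the absence of a step-size restriction is the one place requiring a little care, though it is standard.

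For unconditional energy decay I would use the two tangent-line inequalities furnished by convexity, $W_c(u)-W_c(u_n)\le W_c'(u)(u-u_n)$ and $W_e(u)-W_e(u_n)\ge W_e'(u_n)(u-u_n)$, which combine to $W(u)-W(u_n)\le\big(W_c'(u)-W_e'(u_n)\big)(u-u_n)$. Testing (\ref{eq:eyreAC}) in $L^2$ against $u-u_n$ and using $\int\nabla u\cdot\nabla u_n=\tfrac12\|\nabla u\|^2+\tfrac12\|\nabla u_n\|^2-\tfrac12\|\nabla(u-u_n)\|^2$ yields
\[
\frac1k\|u-u_n\|_{L^2}^2+\tfrac12\|\nabla(u-u_n)\|_{L^2}^2+\tfrac12\|\nabla u\|_{L^2}^2-\tfrac12\|\nabla u_n\|_{L^2}^2=-\frac1{\epsilon^2}\int\big(W_c'(u)-W_e'(u_n)\big)(u-u_n),
\]
and bounding the right-hand side above by $-\tfrac1{\epsilon^2}\int\big(W(u)-W(u_n)\big)$ and rearranging gives
\[
{\mathcal E}(u)-{\mathcal E}(u_n)\le-\frac1k\|u-u_n\|_{L^2}^2-\tfrac12\|\nabla(u-u_n)\|_{L^2}^2\le0,
\]
valid for every $k>0$ and every $u_n$, with the identical computation going through under homogeneous Neumann conditions. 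Thus the main (and only) obstacle is the functional-analytic existence/regularity step; the energy-decay half is a direct consequence of the splitting and uses nothing beyond the elementary convexity inequalities above. \QED
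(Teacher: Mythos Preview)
Your argument is correct. Note, however, that the paper does not actually supply a proof of this theorem: it is stated as a result ``from \cite{eyre}'' and left without proof, so there is no paper proof to compare against. What you have written is precisely the standard convex--concave splitting argument that underlies Eyre's method: existence and uniqueness via strict convexity and coercivity of the implicit functional, and energy decay via the two tangent-line inequalities for $W_c$ and $W_e$ after testing the scheme against $u-u_n$. This is essentially the argument one would find in the original reference, so there is nothing further to flag.
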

Additional first order schemes considered are the SAV scheme \cite{SAV} and a linear IMEX method \cite{imex}:
\begin{equation}
\label{eq:IMEX1}
u - k \Delta u + \frac{Mk}{\epsilon^2} u = u_n- \frac{k}{\epsilon^2} \left( u_n^3-(M+1)u_n \right) 
\end{equation}
with $M>0$, sometimes called a stabilization term. We take $M=2$ since that makes the left hand side a linearization about the far field values, but computational performance is relatively insensitive to $M$. 
The SAV scheme is energy stable with a modified energy. We use the same stabilization coefficient as above in the SAV scheme. There is a class of linearly implicit energy stable schemes \cite{xinyu,LI1,LI2} that require an asymptotically large stabilization term $O(\epsilon^{-p})$ with $p$ large and increasing from AC to CH and 2D to 3D for the analysis. These methods are theoretically interesting but are extremely inaccurate and not useful for practical applications. We have further discussion of these schemes in Remark~\ref{rem:linEstab}. 

All time stepping schemes can be applied to CH (\ref{eq:CH}), with BE and Eyre shown below: 
\begin{align*}
u + k \Delta \Delta u - \frac{k}{\epsilon^2} \Delta f(u) & = u_n & \mbox{BE} \\
u + k \Delta \Delta u - \frac{k}{\epsilon^2} \Delta u^3 & = u_n - \frac{k}{\epsilon^2} \Delta u_n & \mbox{Eyre} 
\end{align*}
In this case, BE is known to have unique solutions with the energy decay property when $k< \epsilon^3$ \cite{Xu} and Eyre is unconditionally energy stable \cite{eyre}.  

\subsubsection{Second Order Schemes} 

We also consider the second order methods Trapezoidal Rule (TR), Secant (S) \cite{secant}, Second Order Backward Differencing (BDF2), and Second Order Singular Diagonal Implicit Runge Kutta (DIRK2) \cite{hairer} methods. These are described below for $u_t = \mathcal{F}(u)$ with 
\begin{align*}
 & \mathcal{F}(u) = \Delta u - f(u)/\epsilon^2 & \mbox{for AC} \\ 
\mbox{and\ \ } & \mathcal{F}(u)  = -\epsilon \Delta \Delta u + \Delta f(u)/\epsilon & \mbox{for CH} 
\end{align*}
With this notation:
\begin{align*}
\mbox{(TR) \ \ \ } & u- \frac{k}{2} \mathcal{F}(u) u = u_n + \frac{k}{2} \mathcal{F}(u_n) \\
\mbox{(BDF2) \ \ \ } & \frac{3u}{2} - k \mathcal{F}(u)  = 2 u_n -\frac{1}{2} u_{n-1}. 
\end{align*}
Secant is a variant of TR with the term $f(u) - f(u_n)$ replaced by 
\[
(W(u) - W(u_n))/(u-u_n)
\] 
where $W$ is the energy term from (\ref{eq:energy}). It is known to be conditionally energy stable \cite{secant}. For the simple form of $W$ we have taken, the expression above can be factored explicitly. 
DIRK2 is a two stage method 
\begin{align*}
& u_{*} -\alpha k \mathcal{F}(u_*) = u^n \\
& u - \alpha k \mathcal{F}(u)  = u^n + (1-\alpha) k \mathcal{F}(u_*)
\end{align*} 
with $\alpha=1-1/\sqrt{2}$. Both DIRK2 and BDF2 are A-stable, and so preferable to TR and Secant from the perspective of stiff ODE solver theory \cite{hairer}. A second order linear IMEX method (SBDF2 \cite{imex}) and two variants of second order SAV methods based on BDF2 are also considered. 

\subsection{Spatial discretization and solution procedure}
\label{s:space}

The current work concentrates on the time stepping errors, and it is convenient to consider the semi-discrete, spatially continuous approximation. 
This idealization is approximated well by the Fourier spectral spatial discretization.
 The computational results shown have sufficient spatial resolution that spatial errors do not affect the results in the digits shown. The computations are done in a full 2D setting, rather than in a reduced dimensional radial setting as could be done, in order to give PCG iteration counts for the nonlinear time stepping methods that have meaning for more general computations. Note that the PCG counts are independent of spatial resolution when the problem is resolved. 

\subsection{Error estimation and adaptive time stepping} 
\label{s:error} 

We perform two time steps of the same size $k$ in order to use a specialized predictor $u_p$ for $u_{n+2}$. 
\begin{equation}
\label{eq:estimator}
u_p= u_n + \frac{k}{3} \left(\mathcal{F}(u_n) + 4 \mathcal{F}(u_{n+1}) + \mathcal{F}(u_{n+2}) \right)
\end{equation}
where $\mathcal{F}(u) = \Delta u - f(u)/\epsilon^2$ for AC and $-\epsilon \Delta \Delta u + \Delta f(u)/\epsilon$ for CH as above. Time step sizes are adjusted so that 
\[
\| u_{n+2} - u_p \|_\infty \leq \sigma. 
\]
The predictor $u_p$ is formally one order more accurate than the numerical approximation $u_{n+2}$ from time stepping, up to fifth order. The predictor has an inherent  dominant  local error $ k^5 u_{ttttt}/90$ that is a pure time derivative of $u$, which is shown below in Section~\ref{s:local} to be a desirable property. 

For the one step methods, the time step is adjusted adaptively to maintain a local error below $\sigma$ as described in \cite{energy}. For BDF2 and its linear variants, time steps are only adjusted by a factor of two. When time steps are reduced (using Hermite cubic interpolation for the restart value) or increased, four time steps are taken before checking the local error to allow relaxation of the initial error layer.  

\section{Local Truncation Errors in Metastable Dynamics}
\label{s:meta}

\subsection{Metastable dynamics} 

In our formulation, it is known that after a short time $O(\epsilon^2)$ solutions to AC tend to interfaces between regions of solution near the equilibrium values, 
\[
u\approx\pm 1. 
\]
These interfaces have width $\epsilon$ and move approximately with curvature motion. We refer to this dynamics as metastable or slow, even though with the particular time scaling we have chosen it occurs in  in $O(1)$ time. For the majority of the time, the solution will be in this regime, so we concentrate now on the expected and observed behaviour of time stepping in this setting. 
With the choice of $f(u) = u^3 - u$, we have 
\begin{equation}
\label{eq:asy}
u(x,t) \approx g(z) 
\end{equation}
with $g(z) := \tanh (z/\sqrt{2})$ and $z = {\rm dist} (x, \Gamma)/\epsilon$, 
where $\Gamma$ is the approximate interface with arc length parameter $s$ moving with curvature motion (normal velocity equal to curvature). We fix its location at the $u=0$ level set. The local coordinates $(s,z)$ are shown in 
Figure~\ref{fig:local}. This structural result on the metastable solution can be obtained with formal asymptotics. In the outer asymptotic region for AC the solution takes the form $u = \pm 1$ to all orders. Curvature motion as the limit $\epsilon \rightarrow 0$ has been proven rigorously \cite{pego2,ABC94}. 

CH has the same metastable solution structure (\ref{eq:asy}) with normal interface velocity given by Mullins-Sekerka flow, in $O(1)$ time in our scaling (\ref{eq:CH}). We refer the reader to the review article \cite{Savin}  for details. 

\begin{figure}
\centerline{
\includegraphics[width=8cm]{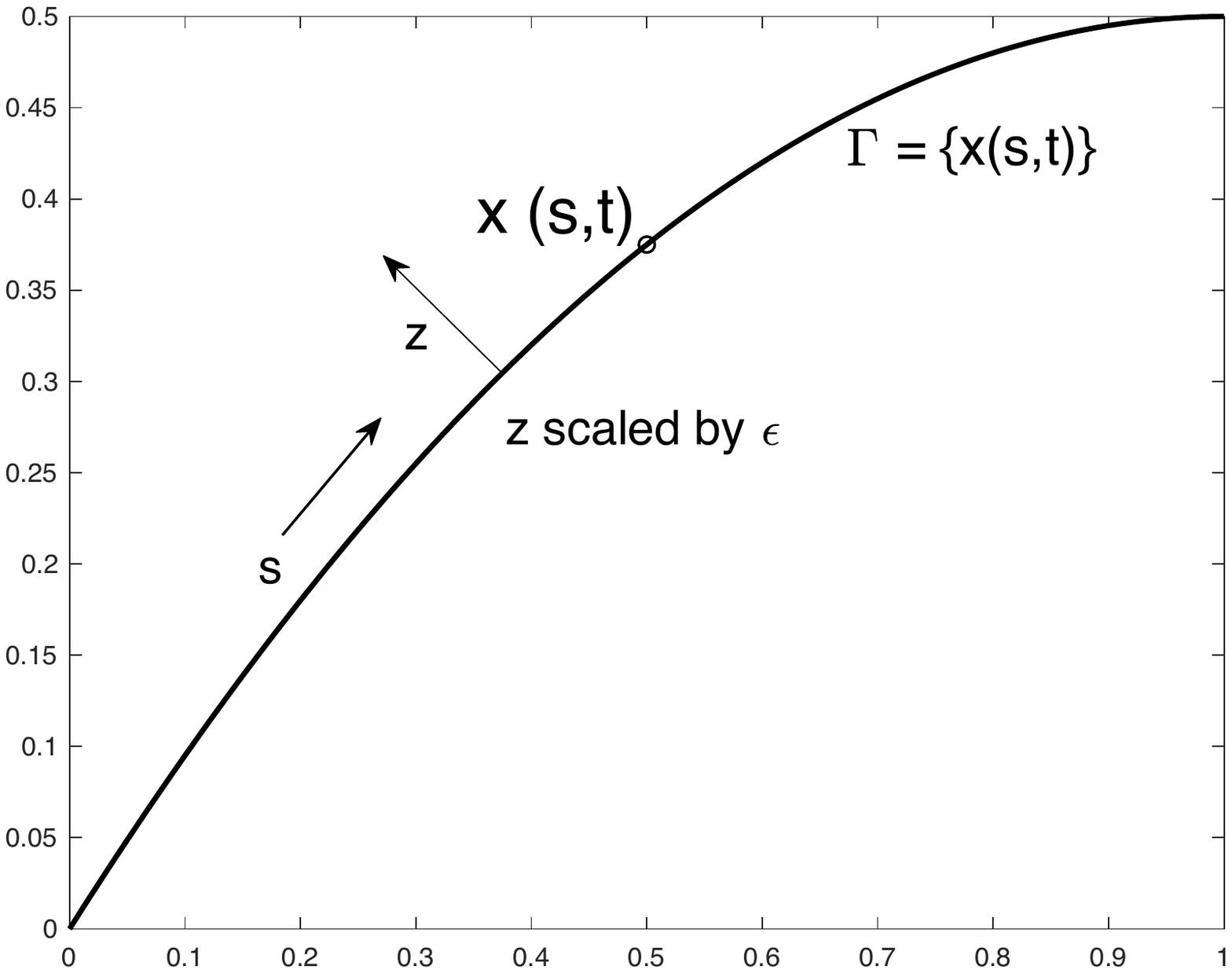}
}
\caption{Sketch of the local coordinates of the metastable solution 
\label{fig:local}}
\end{figure}

From (\ref{eq:asy}), we see that time and space derivatives are large near the interface. Starting with 
\[
u(x,t) \approx g({\rm dist} (x,\Gamma)/\epsilon)
\]
we can take a time derivative to obtain:
\[
u_t \approx g^\prime({\rm dist} (x,\Gamma)) V/\epsilon
\]
where $V$ is the normal velocity at the point on $\Gamma$ closest to $x$. Formally taking higher derivatives in this pattern yields: 
\begin{equation}
\label{eq:timed}
\frac{\partial^n u}{\partial t^n} = O(\epsilon^{-n}).
\end{equation}
This is used to analyze the truncation error of the time stepping schemes. 

\subsubsection{Predicted time step sizes for AC} 
\label{s:predict} 

A standard strategy for adaptive time stepping is to have a user specified local error tolerance of $\sigma$. The error for each time step is estimated and the time step adjusted so that there is an estimated error in that single time step less than $\sigma$. It is known that the dominant local truncation error for BE is $k^2 u_{tt}/2$ which 
in metastable dynamics is $O(k^2/\epsilon^2)$ from (\ref{eq:timed}).
The local truncation error restriction then requires time steps of size
\[
k = O(\sqrt{\sigma} \epsilon) \mbox{\ \ \ (BE)} 
\]

We now proceed to determine the expected behaviour of time steps with $\epsilon$ and $\sigma$ from the other schemes. We can write the BE scheme (\ref{eq:BE}) and Eyre's scheme (\ref{eq:eyreAC}) for AC in an instructive way 
\begin{eqnarray*}
u - u^n - k \Delta u + k \left[ u^3 - u \right]/\epsilon^2 & = & 0 \mbox{\ (BE)} \\
u - u^n - k \Delta u + k \left[ u^3 - u \right]/\epsilon^2 + k(u - u_n)/ \epsilon^2 & = & 0 \mbox{\ (Eyre).}
\end{eqnarray*}
Knowing that the truncation error for BE is $O(k^2/\epsilon^2)$ we see that the truncation error for the Eyre scheme is dominated by the last term in its expression above, which has leading order $k^2u_t/\epsilon^2 = O(k^2/\epsilon^3)$. Our time step prediction in this case is 
\[
k = O(\sqrt{\sigma} \epsilon^{3/2}) \mbox{\ \ \ (Eyre)} 
\]
Thus, the advantage of the Eyre scheme to be able to take large time steps and remain energy stable is never realized if accurate computational results are required. Reference \cite{Xu} has an alternate way to view the loss of accuracy that does not highlight this asymptotic difference. The 
first order IMEX and SAV schemes have the same asymptotic behaviour as Eyre. 

\begin{remark}
\label{rem:profstab}
The formal local error analysis above relies on the stability of the schemes in metastable dynamics under the resulting time step restrictions. More than simple stability, the analysis requires that the time stepping preserves the asymptotic structure of the diffuse interface. This is the concept we have named {\em profile fidelity}. All predicitions described in this section lead to time stepping that preserves profile fidelity for the classical choice of $f(u) = u^3 - u$. We observe the predicted time step behaviour in $\epsilon$ and $\sigma$ computationally. In Section~\ref{s:Eyre_Type} we show that for (most) other reaction terms, Eyre time stepping loses profile fidelity for time steps $k = O(\epsilon^{3/2})$ and in these cases,  $k = O(\epsilon^2)$ is needed for accuracy. 
\end{remark}

\begin{remark}
\label{rem:linEstab}
The first order, linearly implicit energy stable scheme for 2D AC is analyzed in \cite{xinyu}. The analysis requires a stabilization term of order $\epsilon^{-2} | \ln \epsilon|$. If such a scheme were implemented, the time steps required for a local error tolerance of $\sigma$ would be $k = O(\sqrt{\sigma} \epsilon^{5/2} / | \ln \epsilon|)$, prohibitively small for practical computation. 
\end{remark}

We can determine the dominant term in the local truncation errors of the second order schemes applied to AC:
\begin{align*}
\mbox{(TR)\ \ \ } & k^3 u_{ttt}/12 = O(k^3/\epsilon^3) \\
\mbox{(S) \ \ \ } & k^3 \left(u_{ttt}/12 + u u_{t}^2 /(2 \epsilon^2) \right) = O(k^3/\epsilon^4) \\
\mbox{(DIRK2) \ \ \ } & k^3 \left( (\alpha^2(1-\alpha) + \alpha/2 -1/6) u_{ttt} - 
	3 \alpha^2(1-\alpha) u u_t^2/(2 \epsilon^2) \right)= O(k^3/\epsilon^4) \\
\mbox{(BDF2) \ \ \ } &  - k^3 u_{ttt}/3 = O(k^3/\epsilon^3) \\
\mbox{(SBDF2) \ \ \ } & k^3 \left(3 u^2 + (M+1) \right) u_{tt}/\epsilon^2 = O(k^3/\epsilon^4)
\end{align*}
We consider two second order SAV variants based on how an extrapolated approximation is computed. If the extrapolated value of $u^{n+1}$ is taken as $2u^n - u^{n-1}$ the scheme (referred to as SAV2-A) behaves similarly to SBDF2. If the extrapolated value is computed with a first order linear IMEX scheme as suggested in \cite{SAV} (referred to as SAV2-B), the scheme has a local truncation error of order $k^3/\epsilon^5$. 
The results are summarized in Table~\ref{t:AC}. It is clear that BE takes asymptotically (as $\epsilon \rightarrow 0$) fewer time steps than Eyre, although they are both first order in time step size. TR and BDF2 take asymptotically fewer time steps than Secant, DIRK2, SBDF2, SAV2-A and SAV2-B although they are all second order methods. The computations in Section~\ref{s:comp} below show that these time step estimates correspond to real computational behaviour. 

\begin{remark}
\label{rem:kprofile} 
We predict the number $M$ of time steps in Tables~\ref{t:AC} and~\ref{t:CH} and how it varies with $\epsilon$ and $\sigma$. As shown in Figure~\ref{fig:kprofile} we are also predicting how a profile of time steps $k(t)$ behaves with $\epsilon$ and $\sigma$. 
\end{remark}

\begin{table}
\centerline{
\begin{tabular}{|c|c|c|c|} \hline 
Method (AC) & $L$ & $k$ & $M = O(1/k)$  \\ \hline
BE & $k^2/\epsilon^2$ & $\sqrt{\sigma}\epsilon$ & $1/(\sqrt{\sigma}\epsilon)$ \\	
Eyre, IMEX1, SAV1 & $k^2/\epsilon^3$ & $\sqrt{\sigma}\epsilon^{3/2}$ & 
	$1/(\sqrt{\sigma}\epsilon^{3/2})$ \\
TR, BDF2 & $k^3/\epsilon^3$ & $\sqrt[3]{\sigma}\epsilon$ & 
	$1/(\sqrt[3]{\sigma}\epsilon)$  \\
S, DIRK2, SBDF2, SAV2-A &  $k^3/\epsilon^4$ & $\sqrt[3]{\sigma}\epsilon^{4/3}$ & 
	$1/(\sqrt[3]{\sigma}\epsilon^{4/3})$ \\ 
SAV2-B & $k^3/\epsilon^5$ & $\sqrt[3]{\sigma}\epsilon^{5/3}$ & 
	$1/(\sqrt[3]{\sigma}\epsilon^{5/3})$ \\ \hline 
\end{tabular}
}
\caption{Order predictions for the behaviour of the numerical schemes with local error tolerance $\sigma$ in the metastable regime of AC dynamics. Here, $L$ is the local error, $k$ is the time step size, and $M$ is the number of time steps to reach a fixed end time.  
\label{t:AC}
}
\end{table}

\subsubsection{Predicted time step sizes for CH}

The same local truncation analysis can be done for the CH in the metastable regime where the solution has the same interface structure (\ref{eq:asy}) with the interface $\Gamma$ moving approximately with Mullins-Sekerka flow in $O(1)$ time. BE, TR, BDF2, and SBDF2 have the same error expressions as above, but Eyre, Secant and DIRK2 have local truncation errors when applied to CH listed below:
\begin{align*}
\mbox{(Eyre) \ \ } & k^2 (u_{tt}/2 - \Delta u_t/\epsilon) = O(k^2/\epsilon^4) \\
\mbox{(S) \ \ \ } & k^3 \left(u_{ttt}/12 -\Delta (u u_{t}^2)/(2 \epsilon) \right) = O(k^3/\epsilon^5) \\
\mbox{(DIRK2) \ \ \ } & k^3 \left( (\alpha^2(1-\alpha) + \alpha/2 -1/6) u_{ttt} +
	3 \alpha^2(1-\alpha)\Delta 
	(u u_t^2)/(2 \epsilon) \right)= O(k^3/\epsilon^5) \\
\mbox{(SBDF2) \ \ \ } & k^3 \left(3 u^2 + (M+1) \right) \Delta u_{tt}/\epsilon = O(k^3/\epsilon^5)
\end{align*} 
where we have used the fact that the Laplacian $\Delta$ increases the size of terms by $1/\epsilon^2$ near the interface. The 
first order IMEX and SAV schemes have the same asymptotic behaviour as Eyre. 
SAV2-A behaves similarly to SBDF2 as before, with SAV2-B worse by a power of $\epsilon$ as for the AC case above. The results are summarized in Table~\ref{t:CH}. The predictions in this table are validated in the numerical experiments in the next section. Although the methods all have the formal order of accuracy in terms of time step size, the behaviour as $\epsilon \rightarrow 0$ varies significantly. Note that the gap between BE and the other first order schemes, and between TR/BDF2 and Secant/DIRK2/SBDF2/SAV2-A is wider for CH dynamics than it was for AC. 

\begin{table}
\centerline{
\begin{tabular}{|c|c|c|c|} \hline 
 Method (CH) & $L$ & $k$ & $M = O(1/k)$ \\ \hline
BE & $k^2/\epsilon^2$ & $\sqrt{\sigma}\epsilon$ & $1/(\sqrt{\sigma}\epsilon)$ \\	
Eyre, IMEX1, SAV1 & $k^2/\epsilon^4$ & $\sqrt{\sigma}\epsilon^2$ & 
	$1/(\sqrt{\sigma}\epsilon^2)$ \\
TR, BDF2 & $k^3/\epsilon^3$ & $\sqrt[3]{\sigma}\epsilon$ & 
	$1/(\sqrt[3]{\sigma}\epsilon)$ \\
S, DIRK2, SBDF2, SAV2-A &  $k^3/\epsilon^5$ & $\sqrt[3]{\sigma}\epsilon^{5/3}$ & 
	$1/(\sqrt[3]{\sigma}\epsilon^{5/3})$ \\ \hline
\end{tabular}
}
\caption{Order predictions for the behaviour of the numerical schemes with local error tolerance $\sigma$ in the metastable regime of CH dynamics. Here, $L$ is the local error, $k$ is the time step size, and $M$ is the number of time steps to reach a fixed end time.
\label{t:CH}
}
\end{table}

\subsubsection{Discussion: the source of increased local error} 
\label{s:local} 

In the metastable regime, the two terms in AC and CH (diffusion and nonlinear reaction) are both large but approximately cancel to give the slow dynamics. The methods with asymptotically (as $\epsilon \rightarrow 0$) small local errors (BE, TR, BDF2) have dominant truncation errors that are pure time derivatives of the solution, which inherit this high order cancellation. The other methods which have large local errors have truncation errors that involve the reaction term individually. This imbalance amplifies the size of the error. As an example, DIRK2 applied to $u_t = \mathcal{F}(u)$ has an error proportional to $ \mathcal{F}^{\prime \prime} u_t^2$. From this discussion, we believe the ranking of the schemes in this work will also apply to other nonlinear problems with metastable dynamics. 

\section{Computational Results} 
\label{s:comp}

\subsection{Allen Cahn} 
\label{s:ACcomp} 

We take initial conditions in the form of a radial front 
\[
\tanh \frac{\sqrt{(x-\pi)^2 + (y-\pi)^2}-2}{\epsilon \sqrt{2}}
\]
and compute with $\epsilon$ = 0.2, 0.1, 0,05 and 0.025. The benchmark for accuracy is the time $T$ at which the value at the domain centre $(\pi, \pi)$ changes from negative to positive. Except for the exponentially small (in $\epsilon$) derivative discontinuities at the periodic boundaries, the dynamics approximate the sharp interface limit of curvature motion of a circle. The expectation from asymptotic analysis of the sharp interface limit is that 
\[
T = 2 + O(\epsilon^2). 
\]
This is confirmed by the numerical solutions below. Some snapshots of the dynamics are shown in Figure~\ref{fig:AC}. A video of the dynamics is also available \cite{YTAC}.   

\begin{figure}
\centerline{
\includegraphics[width=5cm]{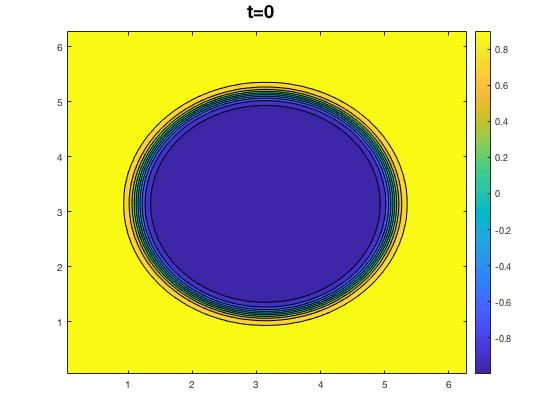}
\includegraphics[width=5cm]{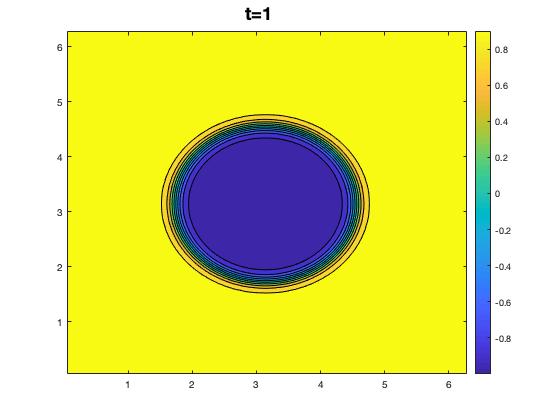}
\includegraphics[width=5cm]{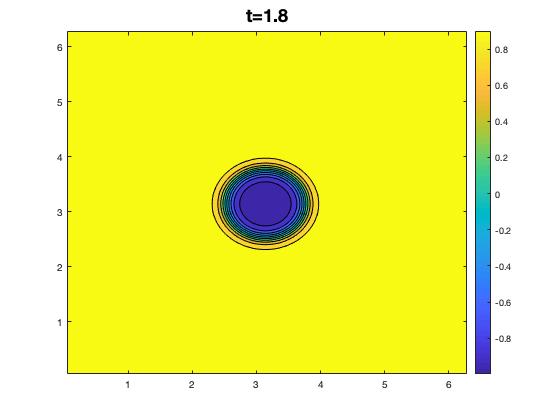}
}
\caption{Allen Cahn dynamics with $\epsilon = 0.1$.
\label{fig:AC}}
\end{figure} 

\subsubsection{First order methods} 

The PCG approach is known to have bounded condition number under the scaling $k = C\epsilon^2$ for BE with $C<1$ \cite{Xu} and we observe good behaviour in the example below even with $C>1$ in the metastable regime. It is observed computationally in this work that the PCG for Eyre's method is independent of $k$ and $\epsilon$ although the authors are not aware of a proof in the literature.  

Results of the numerical experiments in which $\sigma$ and $\epsilon$ were varied for BE and Eyre are shown in Tables~\ref{t:fixede} and~\ref{t:fixeds}. Spatial errors do not affect the digits shown in any of the computational results in this paper. 
\begin{table}
\centerline{ 
\begin{tabular}{|c||c|c|c||c|c|c|} \hline 
&\multicolumn{3}{|c||}{BE}&\multicolumn{3}{|c|}{Eyre}\\ \hline
$\sigma$ & $M$ & CG & $E$ & $M$ & CG & $E$ \\ \hline 
1e-4 & 717 & 5,348 [7.46] & 0.003 & 2,350 & 14,856 [6.32] & 0.047 \\ 
1e-5 & 2,225 (3.10) & 9,448 [4.24] & 0.001 & 7,351 (3.12) & 28,263 [3.85] & 0.014 \\
1e-6 & 7,010 (3.15) & 23,017 [3.28] & 0.001 & 23,172 (3.15) & 68,148 [2.94] & 0.004 \\ \hline
\end{tabular}
}
\caption{Computational results for the AC benchmark problem with fixed $\epsilon = 0.2$ and local error tolerance $\sigma$ varied. BE results are on the left, Eyre on the right. Here, $M$ is the total number of time steps taken (with the ratio to the value above in brackets), CG is the number of conjugate iterations (with the ratio to the number of time steps in brackets), $E$ is the error in the benchmark time. 
\label{t:fixede}
}
\end{table}

\begin{table}
\centerline{ 
\begin{tabular}{|c||c|c|c||c|c|c|} \hline 
&\multicolumn{3}{|c||}{BE}&\multicolumn{3}{|c|}{Eyre}\\ \hline
$\epsilon$ & $M$ & CG & $E$ & $M$ & CG & $E$ \\ \hline 
0.2 & 717 & 5,348 [7.46] & 0.003 & 2,350 & 14,856 [6.32] & 0.047 \\ 
0.1 & 1,291 (1.80) & 12,354 [9.57] & 0.001 & 6,463 (2.75) & 44,717 [6.92] & 0.069 \\
0.05 & 2,412 (1.87) & 27,782 [11.52] & 0.001 & 18,218 (2.83) & 143,416 [7.87] & 0.099 \\ 
0.025 & 4,630 (1.92) & 64,884 [14.01] & $*$ & 52,595 (2.89) & 497,846 [9.47] & 0.141 \\
\hline
\end{tabular}
}
\caption{Computational results for the AC benchmark problem with fixed local error tolerance $\sigma = 10^{-4}$ and $\epsilon$ varied. Here, $M$ is the total number of time steps taken (with the ratio to the value above in brackets) and CG is the number of preconditioned conjugate gradient iterations (with the ratio to the number of time steps in brackets), $E$ is the error in the benchmark time with $*$ denoting a result correct to three decimal places. 
\label{t:fixeds}
}
\end{table}

Table~\ref{t:fixede} validates the second order $O(k^2)$ local truncation error since the number of time steps was predicted to be $M = O(1/\sqrt{\sigma})$ for both methods with $\epsilon$ constant, noting that $\sqrt{10} \approx 3.16$. Such results for other schemes and for the CH benchmark problem below are not shown, but verify the formal accuracy of the schemes. Table~\ref{t:fixeds} validates the prediction of $M = O(1/\epsilon)$ for BE and $M = O(1/\epsilon^{3/2})$ for Eyre with $\sigma$ constant, noting that $2^{3/2} \approx 2.83$. 
Both tables validate the prediction that for the same local tolerance $\sigma$, Eyre involves more computational work than BE and gives less accurate answers. CG counts for both methods are small as expected. 
You see (unexpectedly) that the final accuracy of BE does not seem to degrade as $\epsilon \rightarrow 0$ for fixed $\sigma$. This is discussed in Section~\ref{s:accuracy} below. Although BE does not guarantee energy stability, no step accepted by the local error tolerance exhibited an energy increase. 

For completeness, we show the time step sizes as a function of time for BE in Figure~\ref{fig:kprofile} with $\epsilon$ and $\sigma$ varied.  As mentioned in Remark~\ref{rem:kprofile} our predictions for the behaviour of the time steps sizes $k$ as $\epsilon$ and $\sigma$ are varied describe a profile $k(t)$. 

We repeat the $\epsilon \rightarrow 0$ study for IMEX1 and SAV1 in Table~\ref{t:AC1}. These methods require a fixed number of FFT calculations per time step to invert the constant coefficient linear implicit aspect of the schemes, with SAV1 requiring four times as many solves as IMEX1. It is seen that IMEX1 behaves almost identically to SAV1 and both are superior to Eyre's method when computational cost is considered. In the context of this study, there is no benefit from the theoretical guarantees of energy stable schemes and BE is the optimal (with our asymptotic definition) first order scheme with IMEX1 the runner up. This will remain true for other nonlinear solver strategies for BE as long as they require fewer than $O(1/\sqrt{\epsilon})$ iterations when adaptive time steps are taken.  

\begin{table}
\centerline{ 
\begin{tabular}{|c||c|c||c|c|} \hline 
&\multicolumn{2}{|c||}{IMEX1}&\multicolumn{2}{|c|}{SAV1}\\ \hline
$\epsilon$ & $M$ &  $E$ & $M$ &  $E$ \\ \hline 
0.2 & 3,932 & 0.067 & 3,936 &  0.067 \\ 
0.1 & 11,110 (2.83) &  0.096 & 11,112 (2.82) & 0.096 \\
0.05 & 31,676 (2.85) &  0.138 &  31,682 (2.85) &  0.138 \\ 
0.025 & 90,748 (2.86) &  0.198 & 90,760 (2.86) &  0.198 \\
\hline
\end{tabular}
}
\caption{Computational results for the AC benchmark problem with fixed local error tolerance $\sigma = 10^{-4}$ and $\epsilon$ varied. Here, $M$ is the total number of time steps taken (with the ratio to the value above in brackets) and $E$ is the error in the benchmark time. 
\label{t:AC1}
}
\end{table}

\begin{figure}
\centerline{
\includegraphics[width=6cm]{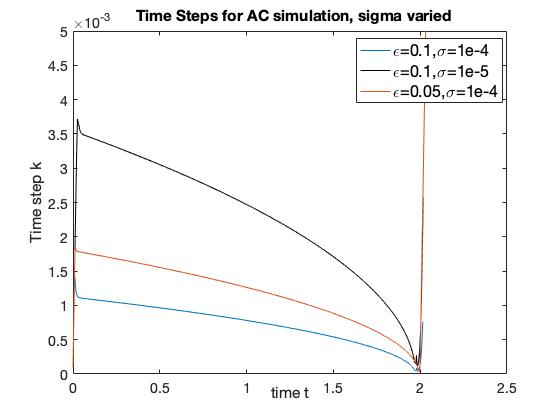}
}
\caption{Time steps $k$ for Allen Cahn dynamics with $\epsilon$ and $\sigma$ varied using BE. The time steps decrease in size as the simulation approaches the topological singularity at $t \approx 2$. Note that the profiles $k(t)$ have the same shape and are scaled in $\sigma$ and $\epsilon$ according to our theoretical predictions. 
\label{fig:kprofile} 
}
\end{figure} 

\begin{remark}
Note that for the BE computation for $\epsilon = 0.025$ we can still get reasonable accuracy taking $\sigma = 10^{-2}$. In this case, the maximum value of $k/\epsilon^2$ is 14.6. Clearly, the theory which guarantees existence of solutions and energy decay for $k < \epsilon^2$ \cite{Xu} can be improved for metastable dynamics. This is explored in the analysis in Sections~\ref{s:accuracy} and~\ref{s:radial_main} below. 
\end{remark}

\subsubsection{Second Order Methods}

The CG counts of all the nonlinear second order methods are relatively insensitive to $\epsilon$, similar to the first order methods shown above. We show the number of time steps used for the seven methods in Table~\ref{t:AC2}, for $\sigma = 10^{-4}$ fixed and $\epsilon$ varied. The superiority of TR and BDF2 is clearly seen with $M = O(1/\epsilon)$, compared to $M = O(1/\epsilon^{4/3})$ (noting that $2^{4/3} \approx 2.52$) for Secant, DIRK2, SBDF2, SAV2-A and  $M = O(1/\epsilon^{5/3})$ (noting that $2^{5/3} \approx 3.18$) for SAV-B as predicted above. The pattern in the number of time steps for the multi-step methods is a bit rougher due to the strict criteria we have used for adaptive time step change. As above, we see no benefit from the theoretical guarantees of energy stable schemes. Fully implicit methods TR and BDF2 are asymptotically optimal in terms of the number of time steps and are computationally optimal if the solvers require fewer than $O(1/\sqrt[3]{\epsilon})$ iterations when adaptive time steps are taken (which appears to be the case with the Newton PCG solver we used). SBDF2 is the runner up and notably it is comparable to the fully implicit DIRK2 method but does not have the overhead of a nonlinear solve. 

It is interesting to note that the slight change in the extrapolation procedure in the SAV2 schemes makes such a difference to their asymptotic performance. It is confirmation that merely considering the order of time stepping scheme and its theoretical energy stability properties is not the whole story. 

\begin{table}
\centerline{ \footnotesize
\begin{tabular}{|c|c|c|c|c|c|c|c|} \hline 
$\epsilon$ & TR & S & BDF2 & DIRK2 & SBDF2 & SAV2-A & SAV2-B \\ \hline 
0.2 & 170 & 236 & 280 & 180 & 588 & 768 & 1,572 \\
0.1 & 278 (1.64) & 512 (2.16) & 472 (1.69) & 364 (2.02) & 1,384 (2.35) & 1,572 (2.04) & 5,436 (3.46) \\
0.05 & 492 (1.77) & 1,208 (2.36) & 860 (1.82) & 814 (2.24) & 3,260 (2.35) & 3,392 (2.16) & 15,088 (2.78) \\ 
0.025 & 916 (1.86) & 2,960 (2.45) & 1,632 (1.90) & 1,894 (2.33) & 7,600 (2.33) & 7,980 (2.35) & 
48,048 (3.18) \\ \hline 
\end{tabular} 
}
\caption{
Computational results for the second order methods applied to the AC benchmark problem with fixed local error tolerance $\sigma = 10^{-4}$ and $\epsilon$ varied. Shown are the total number of time steps taken (with the ratio to the value above in brackets) 
\label{t:AC2} 
}
\end{table}

\subsection{Cahn Hilliard} 

For the initial conditions we take 
\[
\tanh \left( \frac{r-5/2}{\epsilon \sqrt{2}} \right) + 
	\tanh \left( \frac{3/2-r}{\epsilon \sqrt{2}} \right) +1 
\]
with $r = \sqrt{(x-\pi)^2 + (y-\pi)^2}$
and compute with $\epsilon$ = 0.2, 0.1, 0,05 and 0.025. As before, the benchmark is the time $T$ at which the value at the domain centre $(\pi, \pi)$ changes from negative to positive. The dynamics are shown in Figure~\ref{fig:CH} and a video of the dynamics is also available \cite{YTCH7} .   

\begin{figure}
\centerline{
\includegraphics[width=5cm]{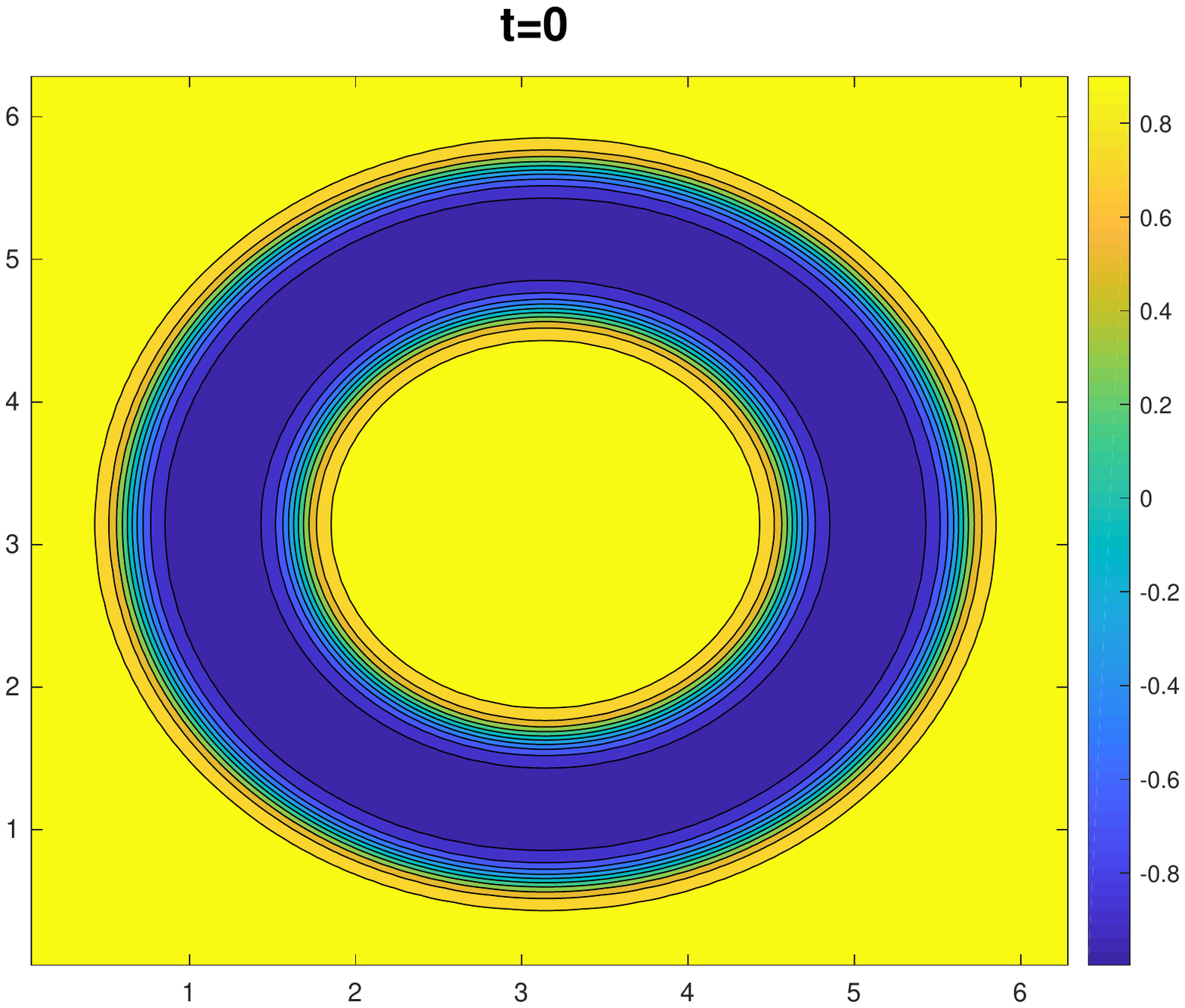}
\includegraphics[width=5cm]{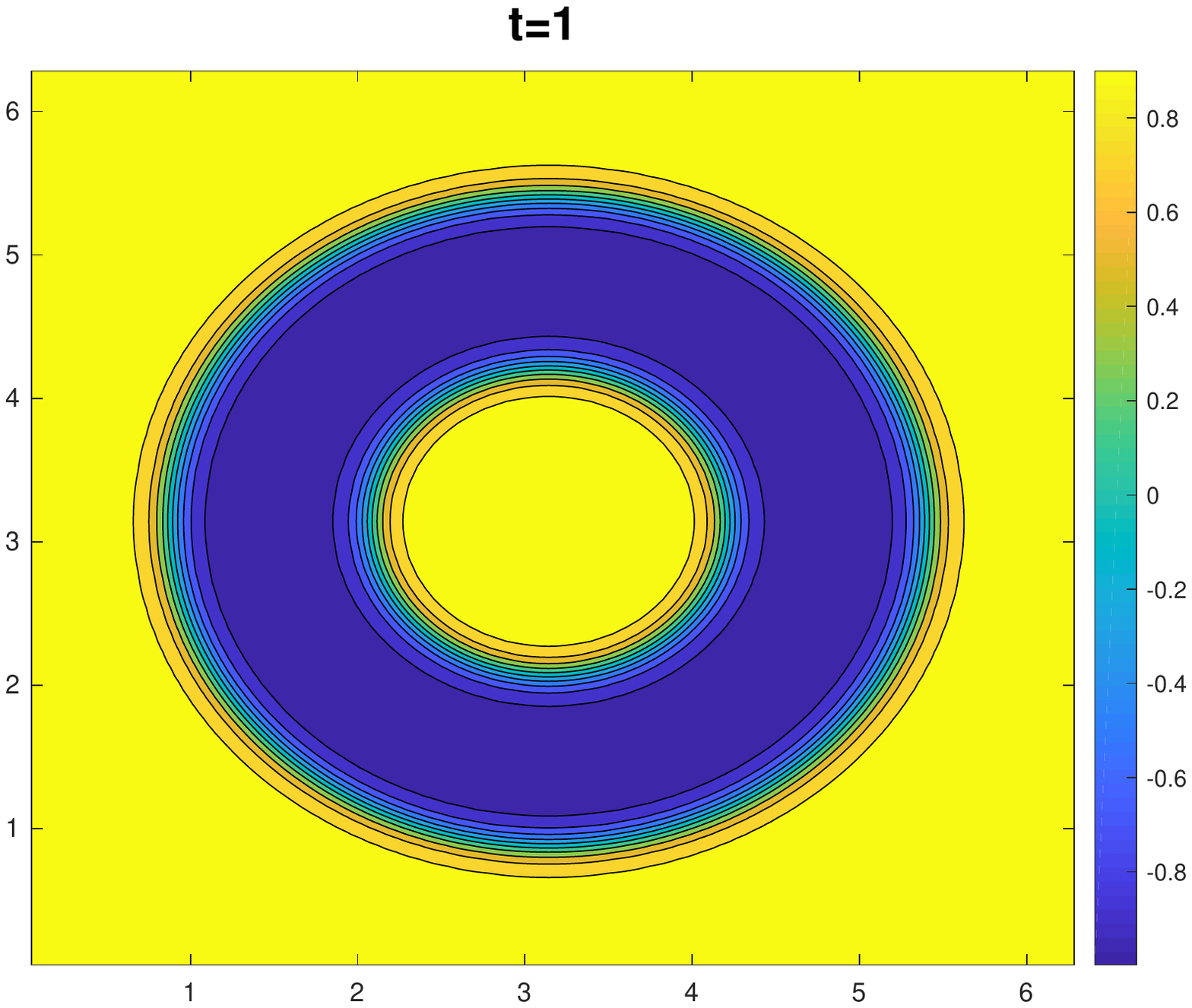}
\includegraphics[width=5cm]{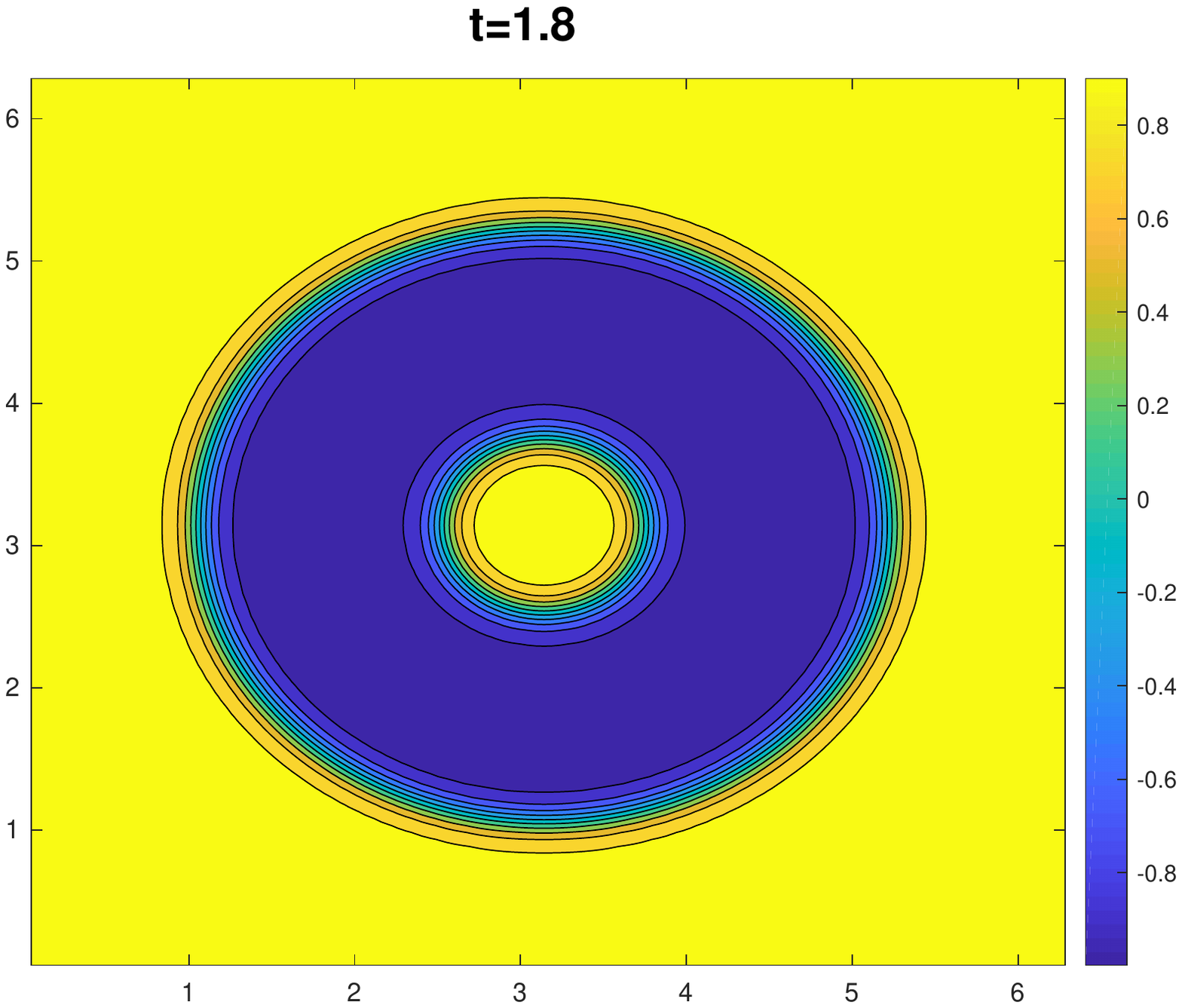}
}
\caption{Cahn Hilliard dynamics with $\epsilon = 0.1$
\label{fig:CH} }
\end{figure} 

\subsubsection{First order methods} 

Results of the numerical experiments in which $\epsilon$ is varied for the first order methods are shown in Table~\ref{t:CHfixeds}. These validate the prediction of $M = O(1/\epsilon)$ for BE and $M = O(1/\epsilon^2)$ for Eyre and IMEX1 with $\sigma$ constant. As for the AC case, SAV1 behaves similarly to IMEX1 at increased computational cost. 
For CH, the implicit problem for BE is more difficult to solve as $\epsilon \rightarrow 0$ with fixed $\sigma$, but it is still more accurate than Eyre stepping for equivalent computational cost. It will be asymptotically more efficient as long as the solution strategy for the nonlinear problem requires fewer than $O(1/\epsilon)$ iterations with adaptive time stepping. As with AC, we see that BE does not suffer from global accuracy decrease as $\epsilon \rightarrow 0$.

\begin{table}
\centerline{ \scriptsize 
\begin{tabular}{|c||c|c|c||c|c|c||c|c|} \hline 
&\multicolumn{3}{|c||}{BE}&\multicolumn{3}{|c||}{Eyre}& \multicolumn{2}{|c|}{IMEX1} \\ \hline
$\epsilon$ & $M$ & CG & $E$ & $M$ & CG & $E$ & $M$ & $E$ \\ \hline 
0.2 & 730 & 5,348 [7.33] & $*$ & 3,055 & 36,684 [12.0] & 0.019 & 
9,982 & 0.014 \\ 
0.1 & 1,184 (1.62) & 24,778 [20.9] & 0.001& 12,751 (4.17) & 190,204 [14.0] & 0.021 & 
43,332 (0.015) & 0.015 \\
0.05 & 2,068 (1.75) & 66,307 [32.1] & $*$ & 52,753  (4.13) & 937,774 [17.8] & 0.022 & 
181,234 (4.18) & 0.015 \\ 
0.025 & 3,768 (1.82) & 198,771 [52.8] & $*$ & 215,443 (4.08) & 4,504,278 [20.9] & 0.022 & 
740,366 (4.09) & 0.015 \\
\hline
\end{tabular}
}
\caption{Computational results for the first order methods applied to the CH benchmark problem with fixed local error tolerance $\sigma = 10^{-4}$ and $\epsilon$ varied. Here, $M$ is the total number of time steps taken (with the ratio to the value above in brackets), CG is the number of conjugate iterations (with the ratio to the number of time steps in brackets), and $E$ is the error in the benchmark time with $*$ denoting a result correct to three decimal places. 
\label{t:CHfixeds}
}
\end{table}

\subsubsection{Second order methods} 

The CG counts for the second order methods behave like those of BE with $\epsilon$ as shown above. We show the number of time steps used for the four methods in Table~\ref{t:CH2}, for $\sigma = 10^{-4}$ fixed and $\epsilon$ varied. The superiority of TR and BDF2 is clearly seen, consistent with $M = O(1/\epsilon)$ , compared to $M = O(1/\epsilon^{5/3})$ (noting that $2^{5/3} \approx 3.17$) for Secant, DIRK2, and SBDF2 as predicted above. Results for SAV2-A are comparable to those for SBDF2. Again, the implications for the asymptotic computational superiority of fully implicit TR and BDF2 under the assumption of sufficient solver efficiency are clear. 

\begin{table}
\centerline{
\begin{tabular}{|c|c|c|c|c|c|} \hline 
$\epsilon$ & TR & S & BDF2 & DIRK2 & SBDF2 \\ \hline 
0.2 & 230 & 534 & 320 & 378 & 1,388 \\
0.1 & 314 (1.36) &  1,530 (2.87) &  468 (1.46) & 788 (2.08) & 4,108 (2.96) \\
0.05 & 474 (1.51) & 4,722 (3.08) & 748 (1.60) & 1,906 (2.42) & 12,352 (3.01) \\ 
0.025 & 792 (1.67) & 14,924 (3.16) & 1,312 (1.75) & 6,048 (3.17) & 44,060 (3.57) \\ \hline 
\end{tabular} 
}
\caption{
Computational results for the second order methods applied to the CH benchmark problem with fixed local error tolerance $\sigma = 10^{-4}$ and $\epsilon$ varied. Shown are the total number of time steps taken (with the ratio to the value above in brackets) 
\label{t:CH2} 
}
\end{table}

\section{Asymptotic Analysis of Properties of BE AC Solutions} 
\label{s:accuracy}

The results in Table~\ref{t:fixeds} present the accuracy for BE applied to AC with fixed local error tolerance $\sigma = 10^{-4}$ under various values of $\epsilon$. It is remarkable the accuracy in the benchmark 
time does not degrade as $\epsilon \rightarrow 0$. This is unexpected, as a na\"ive prediction would be that the final accuracy scaled like $M \sigma = O(\sqrt{\sigma}/\epsilon)$ where $M$ is the number of time 
steps. It is clear that the resulting solution accuracy for the schemes under specified local error tolerance is a nontrivial question. 

We present below the asymptotic analysis of a fully implicit BE time step (\ref{eq:BE}) in two dimensions assuming the solution is in the meta-stable regime. That is,
$u_n$ is approximately described as a curve ${\bf x}_n(s)$ parametrized by arc length with normal $\hat{n}$, dressed with the heteroclinic profile (\ref{eq:asy}). We take the scaling $k = c \epsilon$ with $c$ independent of $\epsilon$, both sufficiently small depending only on the curve ${\bf x}_n$. 
We consider the formal asymptotics for the implicit time step $u$ of (\ref{eq:BE}) in this setting, anticipating that $u$ will have the same local dependence $u(s,z)$. Using 
\[
\Delta \approx \frac{1}{\epsilon^2} \frac{\partial^2}{\partial z^2} + \frac{\kappa}{\epsilon} \frac{\partial}{\partial z}
\]
where $\kappa$ is the curvature of the interface, we find at leading order $O(\epsilon^{-1})$ that $u$ has the same homoclinic structure around a new curve ${\bf x} (s)$. 
That is, 
\begin{equation}
\label{eq:unp1}
u_{n+1} \approx g(z) + \epsilon v(z,s) 
\end{equation}
with $g(z) = \tanh(z/\sqrt{2})$ and where we have changed coordinates to 
$(s,z)$ with 
\[
(x,y) = {\bf x}(s) + \epsilon z \hat{n}
\]
based on the curve  ${\bf x}(s)$ after the implicit time step. In the language of Remark~\ref{rem:profstab} we predict that the scheme preserves profile fidelity and show below that this is asymptotically consistent. 
In (\ref{eq:unp1}), $v(z,s)$ is the correction to the leading order solution. We will identify the size and structure of this term below. 

We take 
\begin{equation}
\label{eq:xspeed} 
{\bf x}_n = {\bf x} - k \rho (s) \hat{n}
\end{equation}
where $\rho$ is the average normal speed through the time step. Recalling that $k = c \epsilon$ and the spatial scaling of $z$, we have 
\begin{equation}
\label{eq:un}
u_n \approx g(z-c \rho(s)).  
\end{equation}
A diagram is shown in Figure~\ref{f:asy}. Note that the variation in normal direction appears in higher order asymptotic terms, so it is consistent in what follows to use the same $\hat{n}$ as normal direction for both curves, {\em i.e.} the same ``$z$". 
\begin{figure}
\centerline{
\includegraphics[width=8cm]{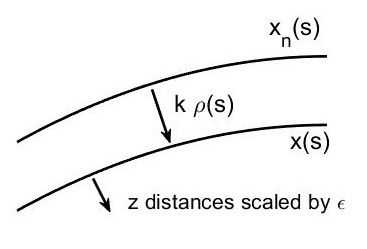}
}
\caption{Sketch of the asymptotic analysis of the fully implicit problem. Here, $\rho(s)$ the average normal speed of the interface between time steps. 
\label{f:asy}}
\end{figure}

Considering now the next order term $O(1)$ in (\ref{eq:BE}) with the forms (\ref{eq:unp1}) and ({\ref{eq:un}):
\begin{equation}
\label{eq:nextord}
g^\prime c \rho + \frac{1}{2} g^{\prime \prime} c^2 \rho^2 + \frac{1}{6} g^{\prime \prime \prime} c^3 \rho^3 \approx 
c \kappa g^\prime - c \mathcal{L} v 
\end{equation}
where $\mathcal{L} := \partial^2/\partial z^2 + f^\prime(g) \cdot$
and we have used the smallness of $c$ for the cubic Taylor approximation of $g(z) - g_n(z)$ on the right hand side. We consider (\ref{eq:nextord}) at each $s$ in the $L^2({\mathbb{R}})$ orthogonal decomposition of $G:= {\rm span}\{ g^\prime(z) \}$ and $G^\perp$. 
Note that $g^{\prime \prime} \in G^\perp$ (this does not depend on the specific reaction term $f = u^3 -u$ chosen here) and $\mathcal{L}$ has $G$ as its kernel and has bounded inverse on $G^\perp$ from standard Fredhold theory \cite{keith}. Thus we have $\rho = \kappa + O(c^2)$ and $v = O(c)$ in $G^\perp$.  Careful examination of these results shows that the errors in $G^\perp$ do not accumulate and are globally of size $O(c \epsilon) = O(k)$ and so decrease as $\epsilon \rightarrow 0$. Global errors in interface {\em position} after $O(1/k)$ time steps are of size $c^2$, independent of $\epsilon$. Global solution value errors due to the position error have size $O(c^2/\epsilon) = O(k^2/\epsilon^3)$ and so it is seen that BE behaves like a second order method in this scaling. This explains the unexpected accuracy in AC BE computations as $\epsilon \rightarrow 0$. 

\begin{remark}
\label{rem:estimator}
Note that the error estimator (\ref{eq:estimator}) uses $\mathcal{F}(u)$ which sees the undamped dominant truncation error term, which is why the number of time steps behaves with $\epsilon$ in the manner predicted in Section~\ref{s:predict}. Thus for BE applied to AC in the metastable regime, the estimator asymptotically over-estimates the local errors actually made.  
\end{remark}

The formal asymptotic results can also be used to show that the implicit time steps in this scaling lead to energy decrease. Neglecting the $O(c^2)$ terms in the interface motion, we have from (\ref{eq:xspeed})
\[
{\bf x}_n = {\bf x} - k \kappa \hat{n}.
\]
Using the identities for arc length parametrized curves $|{\bf x}_s| = 1$, $\kappa \hat{n} = {\bf x}_{ss}$ and ${\bf x}_s \cdot {\bf x}_{sss} = - \kappa^2$ it follows by taking the $s$ derivative of the equation above and the dot product with ${\bf x}_s$ at each $s$ that 
\[
| {\bf x}_{n,s} | \geq 1+ k\kappa^2 \geq{1} = |{\bf x}_s|. 
\]
This shows that the metastable curve at time $n$ is longer than at the next step $n+1$. 
Since the energy $\mathcal E$ is proportional to curve length to highest order in the metastable regime \cite{MM77}, we have shown formally that implicit time stepping for AC has the energy decay property under this time step scaling. Large, accurate, fully implicit time steps can be taken in computations validated in Section~\ref{s:comp}. 

In the next section we show a closely related rigorous result in a radial geometry. The main result is in Proposition~\ref{p:main}. A key ingredient is an identification of a dominant term in the space $G$ that represents the interface motion, separate from heavily damped terms in the perpendicular space, as shown here. Care must be taken to control the size of terms which are formally neglected in this asymptotic analysis. 

\section{Rigorous Radial Analysis of AC With BE and Eyre Time Stepping}  
\label{s:radial_main} 

We derive rigorous asymptotic evolution of a radially symmetric profile for BE and first order Eyre-type methods for
the Allen-Cahn equation in $\R^2$. Extensions to radial profiles in $\R^d$ is immediate. More precisely we consider a splitting $f=f_+-f_-$ and
study the iterative scheme
\begin{align*}
\frac{u-u_n}{k}&=u_{rr}+\frac{1}{r}u_r-\frac{1}{\varepsilon^2}(f_+(u)-f_-(u_n))\ , \hspace{0.5in}r\in[0,\infty)\\
u_r(0)&=0, u(\infty)=1.
\end{align*}
For simplicity, we assume that $f$ is smooth, odd about $u=0$, has precisely three simple zeros at $u=\pm 1$ and at $u=0$,  and tends
to $\pm\infty$ as $u\to\pm\infty$. This includes the classical choice of $f(u) = u^3-u$ but we consider other reaction terms in this class since Eyre's method can have quite different behaviour as shown in Section~\ref{s:Eyre_Type}.
The BE scheme corresponds to the choice $f_-\equiv0$ while  Eyre-type schemes take $f_+', f_-' \geq 0$. 
We pose the problem on the affine space 
\[
Y:= \{ u+1 \in H_R^1(0,\infty)\, \big|\, \partial_r u(0)=0.\},
\]
with $u_n\in Y$ as a given. The assumptions on $f$ imply that the continuous 1D Allen-Cahn equation has a steady state solution
\beq
\label{g-eq}
  g_{zz} = f(u),
\end{equation}
which is heteroclinic to $\pm1$; that is $g\to\pm 1$ as $z\to \pm\infty.$
Considering $R>1$, we modify this $g$ at the order $O(e^{-1/\vep})$ so that $g'=0$ on $(-\infty,-1/\vep)$ for $\eps\ll1$. This introduces exponentially small residuals in the sequel that have no impact upon the salient results of our analysis.

 We introduce $z=\frac{r-R}{\varepsilon}$, the weighted inner product
\[
\langle u,v\rangle_R:= \int_{-R/\vep}^\infty u(z)v(z)\, (R+\vep z)dz,
\]
and the associated spaces $L^2_R$ and $H^1_R$.  We  rewrite the iterative equation as
\beq
\label{e:It-eq}
  \frac{u-u_n}{k} = \vep^{-2} \left( u_{zz} - (f_+(u)-f_-(u_n))\right) + \frac{\vep^{-1}u_z}{R+\vep z},
 \eeq
 on the domain $z\in(-R/\vep,\infty).$   We decompose $u_n$ and $u$ as
\begin{align*} 
 u_n & = g\left(z+ \frac{R-R_n}{\vep}\right)+v_n,\\
 u&= g(z)+v,
 \end{align*}
where $R_n$ and $v_n$ are taken as given and $R$ and $v$ are to be determined. 
The profile associated to $u_n$ is denoted $g_n$ and observe that it admits the expansion
$$g_n=g\left(z+\frac{R-R_n}{\vep}\right)= g+g' \frac{R-R_n}{\vep} + O\left( \left(\frac{R-R_n}{\vep}\right)^2\right). $$ 
In the sequel we will enforce the orthogonality conditions
\begin{equation}
\label{othog-cond}
\langle v,g'\rangle_R=0, \ \ \ \ \langle v_n,g_n'\rangle_R=0,
\end{equation}
and denote the corresponding subspaces of $L^2_R$ by $X^\perp$ and $X_n^\perp$ respectively with the associated
orthogonal projections $\Pi$ and $\Pi_n$.

At this point the analysis of the implicit and Eyre-type schemes diverges sufficiently that we approach them distinctly.

\subsection{Backward Euler estimates}
\label{s:BEradial}

For  BE we take $f_-'\equiv0$, $f=f_+$, and write the iterative map as
\begin{align}
\label{BE-map}
v+\frac{k}{\vep^2}Lv =v_n-(g-g_n)+\frac{kg'}{\vep(R+\vep z)}-\frac{k}{\vep^2}\cN ,
\end{align}
where we have introduced the linear operator
\beq
\label{L-def}
L:= -\left(\partial_z^2 + \frac{ \vep}{R+\vep z} \partial_z\right) + f'(g)=-\frac{1}{R+\vep z}\partial_z\left((R+\vep z)\partial_z\right) +f'(g),
\eeq 
and the nonlinearity
\[
\cN(v):= f(g+v)-(f(g)+f'(g)v).
\]
The operator $L$ is self-adjoint in the weighted inner product
for which the eigenvalue problem takes the form
\[
  L\psi = \frac{\lambda}{R+\vep z} \psi,
\]
subject to $\partial_z\psi(-R/\vep)=0$ and $\psi\to0$ as $z\to\infty.$
Since the profile $g$ solves (\ref{g-eq}), it will be useful to compare $L$ to the simpler operator 
\beq\label{L0-def}
L_0:=-\partial_z^2 + f'(g),
\eeq
arising as the linearization of (\ref{g-eq}) about $g$ in $L^2(\R)$. The operator $L_0$ is self-adjoint on $L^2(\R)$,  
and since $g$ is heteroclinic with $g'>0$,  the Sturm-Liouville theory on $L^2(\R)$ implies that $L_0$ has a  simple, ground-state eigenvalue at $\lambda=0$ with eigenfunction 
$g'$ and the remainder of the spectrum of $L_0$ is strictly positive, in particular $L_0$ is uniformly coercive on the space $\{g'\}^\perp_{L^2(\R)}$. 
While $L$ does not generically have a kernel, it does have an eigenspace with a small associated eigenvalue. However, for $\eps$ sufficiently small, it inherits the coercivity of
$L_0$.
\begin{lemma}
\label{l:BE-coercivity}
Fix $\eps_0>0$ sufficiently small, then there exists $\alpha>0$, independent of $R\geq 1$ and of $\eps\in (0,\eps_0)$, such that
\beq
\label{BE-H1-coercivity}
\langle Lv,v\rangle_R\geq \alpha \|v\|_{H^1_R}^2.
\eeq
for all $v\in H^1_R$ satisfying $\langle v,g'\rangle_R=0.$
\end{lemma}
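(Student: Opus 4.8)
The plan is to establish the uniform coercivity of $L$ on $\{g'\}^\perp$ by perturbing off the known spectral picture of $L_0$ on $L^2(\mathbb{R})$. First I would record the comparison between the weighted inner product $\langle\cdot,\cdot\rangle_R$ and the flat one: on the relevant domain $z\in(-R/\vep,\infty)$ the weight $R+\vep z$ is comparable to $R$ up to a multiplicative factor $1+O(\vep z/R)$, and since $g'$ and $v$ decay exponentially in $|z|$ the discrepancy between $\langle Lv,v\rangle_R$ and $\langle L_0 v,v\rangle_{L^2(\mathbb{R})}$ (and likewise between the two $H^1$ norms) is controlled by $O(\vep)$ times $\|v\|_{H^1}^2$, uniformly in $R\geq 1$. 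The first-order term $\frac{\vep}{R+\vep z}\partial_z$ in $L$ is genuinely $O(\vep)$ in the same sense. So modulo the orthogonality issue, $\langle Lv,v\rangle_R = \langle L_0 v,v\rangle + O(\vep)\|v\|_{H^1}^2$.

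The subtlety, and what I expect to be the main obstacle, is that the constraint is $\langle v,g'\rangle_R=0$ in the \emph{weighted} space, whereas the coercivity of $L_0$ is known on the $L^2(\mathbb{R})$-orthogonal complement of $g'$. These two subspaces differ by an $O(\vep)$ rotation, so a vector $v$ with $\langle v,g'\rangle_R=0$ may have a small but nonzero $L^2(\mathbb{R})$-component along $g'$; since $L_0$ only fails to be coercive precisely in the $g'$ direction, this component must be shown not to spoil the lower bound. The way I would handle it: write $v = v^\flat + c\,g'$ where $v^\flat \perp_{L^2(\mathbb{R})} g'$ and $c = \langle v,g'\rangle_{L^2(\mathbb{R})}/\|g'\|_{L^2(\mathbb{R})}^2$. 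From $\langle v,g'\rangle_R=0$ together with the weight comparison one gets $|c|\,\|g'\|^2 = |\langle v,g'\rangle_{L^2} - \langle v,g'\rangle_R| \le C\vep\|v\|_{L^2}$, so $|c|\le C\vep\|v\|_{L^2}$. Then
\[
\langle L_0 v,v\rangle_{L^2} = \langle L_0 v^\flat, v^\flat\rangle_{L^2} + 2c\langle L_0 v^\flat, g'\rangle_{L^2} + c^2\langle L_0 g',g'\rangle_{L^2}.
\]
The middle term vanishes because $L_0 g' = 0$ and $L_0$ is self-adjoint; the last term also vanishes for the same reason. Hence $\langle L_0 v,v\rangle_{L^2} = \langle L_0 v^\flat,v^\flat\rangle_{L^2} \ge \alpha_0\|v^\flat\|_{H^1}^2 \ge \alpha_0\|v\|_{H^1}^2 - C\vep^2\|v\|_{L^2}^2$, using $\|v - v^\flat\|_{H^1} = |c|\,\|g'\|_{H^1} = O(\vep)\|v\|_{L^2}$.

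Combining the two steps, $\langle Lv,v\rangle_R \ge \alpha_0\|v\|_{H^1}^2 - C\vep\|v\|_{H^1}^2$, and choosing $\eps_0$ so that $C\eps_0 \le \alpha_0/2$ gives \eqref{BE-H1-coercivity} with $\alpha=\alpha_0/2$, uniformly in $R\ge1$ and $\eps\in(0,\eps_0)$. Two technical points deserve care and I would address them explicitly: (i) the exponential localization of $g$ and its derivatives must be quantified so that all ``tail'' contributions from truncating the domain at $z=-R/\vep$ and from the $O(e^{-1/\vep})$ modification of $g$ are genuinely negligible uniformly in $R\ge1$; and (ii) the term $\langle f'(g)v,v\rangle_R$ is not sign-definite (since $f'(g)<0$ near the interface), so the lower bound really does rely on the spectral gap of $L_0$ rather than on any naive pointwise estimate — this is why the decomposition argument above, rather than a direct Poincar\'e-type inequality, is the right tool.
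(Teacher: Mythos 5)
There is a genuine gap, and it sits at the very first step. You justify the comparison between $\langle Lv,v\rangle_R$ and $\langle L_0v,v\rangle_{L^2(\R)}$ by saying that ``$g'$ \emph{and} $v$ decay exponentially in $|z|$,'' but $v$ is an arbitrary element of $H^1_R$ and has no decay you are entitled to use. Consequently the weight $R+\vep z$ is \emph{not} uniformly comparable to $R$ on the whole domain $(-R/\vep,\infty)$: for $z\gtrsim R/\vep$ the relative discrepancy is $O(1)$ or worse, so the difference between the weighted and flat quadratic forms is of the same order as the form itself, not $O(\vep)\|v\|_{H^1}^2$. Relatedly, your final inequality is coercivity with respect to the flat $H^1(\R)$ norm, whereas the lemma requires the weighted norm $\|v\|_{H^1_R}^2$, which can be much larger; an $O(\vep)$-perturbation of $L_0$'s flat spectral gap cannot by itself produce that. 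The part of your argument that rotates the constraint --- bounding the $L^2(\R)$-component of $v$ along $g'$ by $O(\vep)\|v\|$ using the exponential localization of $g'$, and noting that this component is spectrally harmless since $L_0g'=0$ --- is sound and is essentially how the paper also transfers the weighted orthogonality condition to approximate orthogonality to the ground state.

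The missing idea is a near/far decomposition of the domain, which is what the paper's appendix does. On $|z|\le\ell$ with $\ell$ large but fixed and $\vep\ell\ll1$, the weight \emph{is} comparable to $R$, and there one runs the spectral argument: the paper compares $L$ to $L_0$ on the truncated interval with Neumann conditions, whose eigenvalues are exponentially close to those of $L_0$ on $\R$, and uses the (approximate) orthogonality to $g'$ to show the component along the truncated ground state is exponentially small, yielding a uniform lower bound there. On the complement $[-R/\vep,\infty)\setminus[-\ell,\ell]$, no spectral input is needed: $f'(g)$ is bounded below by a positive constant (here the modification of $g$ near $z=-R/\vep$ matters), so the form is pointwise coercive directly in the weighted norm --- this is exactly the regime your global $O(\vep)$-perturbation claim cannot reach, but it is also where coercivity is trivial. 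Splitting $v$ according to where the majority of its $L^2_R$ mass lies and combining the two regimes gives the uniform $L^2_R$ coercivity; the upgrade to $H^1_R$ then follows from the paper's convex-combination trick, writing $\langle Lv,v\rangle_R = t\langle Lv,v\rangle_R+(1-t)\langle Lv,v\rangle_R$ and using the $L^2_R$ bound on one piece to absorb $\|f'(g)\|_\infty$. If you restructure your argument around this decomposition, your orthogonality-rotation step can be kept essentially as is for the near region.
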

\begin{proof}
\noindent We defer the proof of $L^2_R$ coercivity  to the appendix.
To extend coercivity to $H^1_R$ we observe that
$$\langle Lv,v\rangle_R = \int_{-R/\vep}^\infty (R+\vep z)\left( |v'|^2+f'(g)|v|^2\right)\,dz,$$
so that for any $t\in(0,1)$ we may write
\begin{align*}
\langle Lv,v\rangle_R &= t \langle Lv,v\rangle_R+(1-t) \langle Lv,v\rangle_R, \nonumber\\
                                  &\geq \int_{-R/\vep}^\infty(R+\vep z)(t|v'|^2 + ((1-t)\alpha-t\|f'(g)\|_\infty) |v|^2)dz,\nonumber\\
                                  & \geq \tilde{\alpha}\|v\|_{H^1_R}^2,
\end{align*}
 where we have introduced $\tilde{\alpha}:= \alpha/(1+\alpha+\|f'(g)\|_\infty)>0.$
 Dropping the tilde, we have (\ref{BE-H1-coercivity}) with $\alpha$ independent of $\vep>0$ and $R>1$.
\end{proof}
\vskip 0.1in

We assume throughout our analysis that $\|v\|_{H^1_R}$ and $\|v_n\|_{H^1_R}$ are uniformly bounded by $\delta\ll 1$.
Returning to (\ref{BE-map}), we denote its right-hand side as $\cF$. To have the inversion of the operator
on the left-hand side be contractive the term $\cF$ must be approximately orthogonal to the small eigenspace of $L$. 
As Lemma\,\ref{l:BE-coercivity} shows it is sufficient to be $L_R$-orthogonal to $g'$, the kernel of $L_0.$ To this end we determine $R=\hRbe(v,v_n,R_n)$ 
such that $\cF\in X^\perp$, or equivalently
\beq
\label{BE-orthog}
\langle \cF,g'\rangle_{R}=0.
\eeq
Assuming this condition has been enforced we introduce
\[
M := I+ \frac{k}{\vep^2} L,
\]
and may rewrite the BE iteration in the equivalent formulation
\beq\label{BEC-map}
v = \cGbe(v,v_n,R-R_n):=M^{-1}\Pi \cF(v,v_n,R-R_n).
\eeq
The key step is the introduction of the operator $\Pi$, the orthogonal projection onto $X^\perp$.  This is redundant when $\cF\in X^\perp$, but preserves contractivity for choices of $(v,v_n,R_-R_n)$ when it is not.
 Our goal is to show the function $\cGbe$ is a contraction mapping and to develop asymptotic formula for $R$ and $v$. 

\begin{lemma}
The function $R=\hRbe$ satisfies the implicit relation
\begin{align}
\frac{R-R_n}{k} &= -\frac{1}{R}  +\frac{k}{4R^3} -\frac{b_1 k^2}{\vep^2R^3}+O\left(\delta,\frac{k^3}{\vep^2},\frac{\delta^2}{\vep}\right).
\label{LO_BE-iteration}
\end{align}
where
\beq
\label{BE:b1-def}
b_1:= \frac{\|g''\|_R^2}{6\|g'\|_R^2}>0.
\eeq
Moreover we have the Lipshitz estimate
\beq
\label{BE-ortho-Lip}
|\hRbe(v;v_n,R_n)-\hRbe(\tv;v_n,R_n)| \leq c \frac{k\delta}{\vep} \|v-\tv\|_{R},
\eeq
so long as $k\delta^2 \ll \vep^2.$
\end{lemma}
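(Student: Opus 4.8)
The plan is to derive the implicit relation \eqref{LO_BE-iteration} by enforcing the orthogonality condition \eqref{BE-orthog} and extracting its asymptotic content order by order in the small parameters $k/\vep$, $\delta$, and $k^2/\vep^2$. First I would take the $L^2_R$ inner product of the right-hand side $\cF$ of \eqref{BE-map} with $g'$. The term $-(g-g_n)$ is the main driver: using the expansion $g_n = g + g'\frac{R-R_n}{\vep} + \frac12 g''\bigl(\frac{R-R_n}{\vep}\bigr)^2 + \frac16 g'''\bigl(\frac{R-R_n}{\vep}\bigr)^3 + \cdots$, the inner product $\langle g-g_n, g'\rangle_R$ produces $-\frac{R-R_n}{\vep}\|g'\|_R^2$ at leading order (since $\langle g'',g'\rangle_R$ vanishes up to the $\vep/R$ weight correction — this is where the $g''\in G^\perp$ remark is used), with the cubic term contributing $-\frac16\bigl(\frac{R-R_n}{\vep}\bigr)^3\langle g''',g'\rangle_R = \frac16\bigl(\frac{R-R_n}{\vep}\bigr)^3\|g''\|_R^2$ after integration by parts. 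The curvature term $\frac{kg'}{\vep(R+\vep z)}$ contributes $\frac{k}{\vep}\int \|g'\|^2\,dz = \frac{k}{\vep}\|g'\|_R^2 \cdot \frac1R$ to leading order in the weighted measure, giving the $-1/R$ term once divided through. The $\frac{k}{\vep^2}\cN$ nonlinearity is quadratic in $v = O(\delta)$, hence contributes at order $k\delta^2/\vep^2$, absorbed into the error; and $\langle v_n,g'\rangle_R$ is $O(\delta\cdot |R-R_n|/\vep)$ by the near-orthogonality of $v_n$ to $g'$ (it is orthogonal to $g_n'$, not $g'$), also absorbed.

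The next step is to solve the resulting scalar relation for $R-R_n$. Writing $\xi := (R-R_n)/k$, the leading balance reads $\xi = -1/R + (\text{higher order})$, so $R-R_n = O(k)$ and hence $(R-R_n)/\vep = O(k/\vep)$. Substituting back, the cubic term $\frac16\bigl(\frac{R-R_n}{\vep}\bigr)^3\|g''\|_R^2 / \bigl(\frac{1}{\vep}\|g'\|_R^2\bigr)$ becomes, using $R-R_n \approx -k/R$, a term of size $\frac{k^2}{\vep^2}\cdot\frac{\|g''\|_R^2}{6R^3\|g'\|_R^2}$, which is exactly $\frac{b_1 k^2}{\vep^2 R^3}$ with $b_1$ as in \eqref{BE:b1-def}; I also need the $O(k)$ correction to the $-1/R$ term, which comes from the $\vep z$-weight expansion of $\int \|g'\|^2 (R+\vep z)\,dz$ against $\frac{1}{R+\vep z}$ and from the self-consistent correction to $R$ in $-1/R$, producing the $+\frac{k}{4R^3}$ term (the constant $1/4$ being a moment of $g'$ that should be checked against the explicit $\tanh$ profile in the classical case, though the statement holds for the general class). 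Collecting, the remainder is of size $O(\delta)$ (from the $v$-linear pieces), $O(k^3/\vep^2)$ (from the quartic Taylor term and next curvature correction), and $O(\delta^2/\vep)$ (from $\cN$), matching \eqref{LO_BE-iteration}.

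For the Lipschitz estimate \eqref{BE-ortho-Lip}, the plan is to differentiate the orthogonality relation \eqref{BE-orthog} implicitly with respect to $v$, or equivalently to subtract the relations defining $\hRbe(v;v_n,R_n)$ and $\hRbe(\tv;v_n,R_n)$. The dependence of $\cF$ on $v$ enters only through the nonlinearity $-\frac{k}{\vep^2}\cN(v)$ (and through the projection bookkeeping, but $\hRbe$ is defined precisely so that $\cF\in X^\perp$, so it is the raw $\cF$ that matters). Since $\cN$ is quadratic, $\cN(v)-\cN(\tv) = O\bigl((\|v\|+\|\tv\|)\|v-\tv\|\bigr) = O(\delta\|v-\tv\|_R)$, so $\langle \cN(v)-\cN(\tv),g'\rangle_R = O(\delta\|v-\tv\|_R)$; dividing by the coefficient $\frac1\vep\|g'\|_R^2$ that multiplies $(R-R_n)$ in the linearized relation gives the factor $\frac{k}{\vep}\cdot\delta$. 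The condition $k\delta^2\ll\vep^2$ is what guarantees the implicit-function/contraction argument producing $\hRbe$ is non-degenerate (the derivative of the orthogonality functional in $R$ stays bounded below by roughly $\frac1\vep\|g'\|_R^2$, not swamped by the $\cN$-contribution).

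The main obstacle I anticipate is the careful tracking of the weighted-measure corrections: because the inner product $\langle\cdot,\cdot\rangle_R$ carries the factor $(R+\vep z)$ rather than a flat measure, the "clean" orthogonalities valid in $L^2(\R)$ — namely $\langle g'',g'\rangle = 0$ and the Sturm–Liouville structure of $L_0$ — only hold up to $O(\vep)$ corrections, and it is precisely these corrections that generate the $O(k)$-level term $\frac{k}{4R^3}$ and feed into whether the $O(\delta)$, $O(k^3/\vep^2)$, $O(\delta^2/\vep)$ error bookkeeping is honest. Keeping the domain truncation at $z=-R/\vep$ under control (the exponentially small modification of $g$ mentioned before \eqref{g-eq}) is a minor but necessary piece of this. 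I would organize the computation so that all flat-measure integrals are evaluated first and all weight corrections are collected into a single remainder, then verified to be of the claimed order using $R-R_n=O(k)$ self-consistently.
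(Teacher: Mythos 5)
Your proposal follows essentially the same route as the paper: enforce $\langle\cF,g'\rangle_R=0$, Taylor-expand $g_n$ about $g$, use $\langle\cN,g'\rangle_R=O(\delta^2)$ and the leading balance $R-R_n\approx -k/R$ to generate the $k/(4R^3)$ and $b_1k^2/(\vep^2R^3)$ corrections, and derive the Lipschitz bound from the quadratic nature of $\cN$ together with the condition $k\delta^2\ll\vep^2$ to absorb the self-referential term coming from the $R$-dependence of $g'$. One small correction: the $+k/(4R^3)$ term does not come from the curvature term (whose weight cancels exactly, $\langle g'/(R+\vep z),g'\rangle_R=\|g'\|_R^2/R$) nor from a $\tanh$-specific moment needing verification; it comes from the quadratic Taylor term via $\langle g'',g'\rangle_R=-\tfrac{\vep}{2R}\|g'\|_R^2$ (integration by parts against the weight $R+\vep z$), a universal identity and exactly the weight correction you already flagged in your first paragraph.
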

\begin{proof}
Due to parity considerations, we remark that $\|g'\|_R^2=R\|g'\|^2_{L^2(\R)}$, up to exponentially small terms. For brevity, and
as an element of foreshadowing, we approximate $(R-R_n)$ by $k$ in the $O$-error terms. We address the terms
in $\cF$  and derive the following elementary estimates,
\begin{align}
\label{BE:vn-est}
\langle v_n,g'\rangle_R &= \langle v_n,(g'-g_n')\rangle_{R}
 = -\langle v_n,g''\rangle_{R} \frac{R-R_n}{\vep} + O\left(\delta \frac{k^2}{\vep^2}\right),\\
\label{BE:g-gn-est}
\langle g-g_n,g'\rangle_R &=  -\|g'\|_R^2\left(\frac{ (R-R_n)}{\vep}-\frac{(R-R_n)^2}{4R\vep}\right) +\nonumber\\
   & \hspace{0.5in}+\frac{\|g''\|_R^2}{6}\frac{(R-R_n)^3}{\vep^3}+ O\left(\frac{k^4}{\vep^3}\right),\\
\label{BE:g'-est}
\left\langle \frac{g'}{R+\vep z},g'\right\rangle_{R} &=\|g'\|_{L^2(\R)}^2=\frac{\|g'\|_R^2}{R},\\
|\langle \cN,g'\rangle_{R}|&\leq c\delta^2. \label{BE:cN-est}
\end{align}
For this scheme, $\cF$ depends upon $v$ only through $\cN$. Collecting terms in the orthogonality condition that are linear
in $R-R_n$ and identifying relevant higher order terms yields the relation
\beq
\frac{R-R_n}{k} = -\frac{1}{R}  +\frac{(R-R_n)^2 }{4Rk} +\frac{b_1(R-R_n)^3}{k\vep^2}+O\left(\delta,\frac{k^3}{\vep^2},\frac{\delta^2}{\vep}\right)
\label{LO_BE-iteration2}
\eeq
where $b_1$ is given in (\ref{BE:b1-def}). Under the assumptions on $k$ and $\delta$ we have the
leading order result $ R-R_n=-k/R$. Substituting this relation into (\ref{LO_BE-iteration2}) yields
the result (\ref{LO_BE-iteration}). 

To obtain the Lipschitz estimate we observe from the estimates above that
\[
|\hRbe(v)-\hRbe(\tv)| \leq c \frac{k}{\vep\|g'\|_R^2}\left|\langle \cN(v),g'\rangle_R-\langle \cN(\tv),\tg'\rangle_R\right|.
\] 
The nonlinearity satisfies the Lipschitz properties
\[
  \|\cN(v)-\cN(\tv)\|_R\leq c \delta \|v-\tv\|_R,
 \]
 while 
 \[
 \|g'-\tg'\|_R\leq c\frac{ |\hRbe(v)-\hRbe(\tv)|}{\vep}.
 \]
 Adding and subtracting $\langle \cN(\tv),g'\rangle_R$  and using (\ref{BE:cN-est}), we arrive at the estimates
\[
 |\hRbe(v)-\hRbe(\tv)| \leq c\left( \frac{k\delta}{\vep} \|v-\tv\|_R + \frac{k\delta^2}{\vep^2}  |\hRbe(v)-\hRbe(\tv)|\right).
 \]
 Imposing the condition $k\delta^2\ll \vep^2$ yields (\ref{BE-ortho-Lip}).
\end{proof}

To establish bounds on the map $\cGbe$ defined in (\ref{BEC-map}) we apply $M$ to both sides of the relation
and take the $L^2_R$ inner product with respect to $\cGbe$. Using the coercivity  estimate
(\ref{BE-H1-coercivity}) we find
\[
\|\cGbe\|_R^2 + \alpha \frac{k}{\vep^2} \|\cGbe\|_{H^1_R}^2 \leq \|\Pi\cF\|_R\|\cGbe\|_R.
\]
Taking $v,v_n\in B_{H^1_R}(\delta)$ for  $\delta\ll1$ and recalling that
 the projection $\Pi$ crucially cancels the leading order term in $g-g_n$, we estimate
\beq
\label{BE-coercivity}
\|\cGbe\|_R + \alpha \frac{k}{\vep^2} \|\cGbe\|_{H^1_R} \leq c \left(\delta + \frac{k^2}{\vep^2}+k+ \frac{k}{\vep^2}\delta^2\right).
\eeq

For the BE system we examine distinguished limits $k=\vep^s$, for $s\in (1,2)$, which we call the large time-step regime, for which
the $H^1_R$ term is dominant on the left-hand side of (\ref{BE-coercivity}). We drop the $L^2_R$ term to find,
\[
\|\cGbe\|_{H^1_R} \leq c \left(\delta \vep^{2-s} + \vep^s +\delta^2\right).
\]
Taking $\delta=\vep^{s'}$ for any $s'>\max\{s/2,2(s-1)\}$ then we determine that
\[
\|\cGbe\|_{H^1_R}\leq c(\vep^{2-s+s'}+\vep^s+\vep^{2s'}) \leq \delta,
\]
for $\vep$ sufficiently small. In particular, since $s>\max\{s/2,2(s-1)\}$ in the large time-stepping regime, we may take $\delta=k=\vep^s$, so that,
viewing $\cGbe$ as a map on $(v,v_n)$, we have 
$\cGbe:B_{H^1_R}(k)\times B_{H^1_R}(k)\mapsto  B_{H^1_R}(k),$
for all $s$ in the large time-step regime.

\begin{prop}
\label{p:main}
Fix $1<s<2$, then in the distinguished limit $k=\vep^s$ the function $\cGbe$ defined in (\ref{BEC-map}) with $R:=R_{n+1}=\hRbe(v;v_n,R_n)$  maps
$B_{H^1_R}(k)\times B_{H^1_R}(k) $ into $B_{H^1_R}(k)$ and is a strict contraction. In particular it has a unique solution $v\in B_{H^1_R}(k)$,
denoted by $v_{n+1}$ which satisfies
\[
 \left\| v_{n+1} - \frac{k}{R^2} L\Pi g''\right\|_{H^1_R} \leq c \frac{\vep^2}{k} \|v_n\|_R + O(k^2).
\]
In particular there exists $c>0$ such that for all $v_0\in B_{H^1_R}(ck)$ and $R_0>1$ the sequence $\{(v_n,R_n)\}_{n=1}^N$ satisfies
$v_n\in B_{H^1_R}(ck)$ while $\{R_n\}_{n=0}^N$ satisfies the backwards Euler iteration
\beq
\label{LO_BE-bigstep} 
\frac{R_{n+1}-R_n}{k}= -\frac{1}{R} - \frac{b_1 k^2}{\vep^2R^3} + O(k),
\eeq
where $b_1>0$ is given by (\ref{BE:b1-def}). Here  $N$ is the iteration number such that $R_N>1$ and $R_{N+1}<1.$
\end{prop}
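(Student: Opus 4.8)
The plan is to realize $v_{n+1}$ as the unique fixed point in $B_{H^1_R}(ck)\subset H^1_R$ of the self-map
\[
\Phi(v):=\cGbe\bigl(v,\,v_n,\,\hRbe(v;v_n,R_n)-R_n\bigr),
\]
then to read off its leading-order form by inverting $M$ in the regime $k/\vep^2\to\infty$, and finally to propagate the bound $v_n\in B_{H^1_R}(ck)$ together with the $R$-recursion inductively in $n$. The structural point throughout is the splitting into the interface direction $G={\rm span}\{g'\}$, which is absorbed by the choice $R=\hRbe$, and the complement $X^\perp$, on which $M^{-1}=(I+\tfrac{k}{\vep^2}L)^{-1}$ gains the small factor $O(\vep^2/k)$ by the coercivity (\ref{BE-H1-coercivity}). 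Self-mapping is essentially the a priori estimate (\ref{BE-coercivity}): with $k=\vep^s$, $1<s<2$, and $\delta=ck$, the right side of (\ref{BE-coercivity}) is dominated by $k^2/\vep^2$ and the $H^1_R$ term dominates the left, so dividing by $\alpha k/\vep^2$ gives $\|\Phi(v)\|_{H^1_R}\le c' k$, and enlarging $c$ closes the argument. That the weight in $\langle\cdot,\cdot\rangle_R$ and the projection $\Pi$ are evaluated at $R=\hRbe(v)$ is harmless, since $R$ stays in a fixed compact subinterval of $(1,R_0]$ and all these norms and projections are then uniformly comparable.

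For the contraction I would set $\Delta R=\hRbe(v)-\hRbe(\tv)$ and use the Lipschitz bound (\ref{BE-ortho-Lip}), giving $|\Delta R|\le c\,(k\delta/\vep)\|v-\tv\|_R$. The dependence of $\cF$ on $v$ enters directly through $\cN$ and through $R$ in $g_n=g(z+(R-R_n)/\vep)$, in the factor $(R+\vep z)^{-1}$, and in $\Pi$; each piece is Lipschitz with a small constant — after multiplication by $k/\vep^2$ the $\cN$ and $g_n$ contributions are $O(k\delta/\vep^2)$ and the remaining ones are strictly smaller. Composing with the uniform $L^2_R$-boundedness of $M^{-1}$ and the bound $\|M^{-1}\|_{L^2_R\to H^1_R}\le c\vep^2/k$ (a consequence of the coercivity of $L$ on $X^\perp$) yields $\|\Phi(v)-\Phi(\tv)\|_{H^1_R}\le c\,k\,\|v-\tv\|_{H^1_R}$, a strict contraction for $\vep$ small; this is the step where $k\delta^2\ll\vep^2$ is used. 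Banach's theorem then produces the unique $v_{n+1}\in B_{H^1_R}(ck)$, and the choice $R_{n+1}:=\hRbe(v_{n+1};v_n,R_n)$ forces $\cF\in X^\perp$, so $\Pi\cF=\cF$ and $v_{n+1}=M^{-1}\cF$ solves (\ref{BE-map}).

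To obtain the asymptotic expansion I would expand $\cF$ using (\ref{LO_BE-iteration}). The orthogonality condition (\ref{BE-orthog}) removes the $O(k/\vep)$ part of $\cF$ parallel to $g'$ (the interface motion), leaving as the largest term of $\cF\in X^\perp$ the quadratic Taylor remainder of $g-g_n$, of order $g''(R-R_n)^2/\vep^2\sim k^2\vep^{-2}\Pi g''$ after substituting $R-R_n=-k/R+O(k^2/\vep^2)$; the contributions of $v_n$, of the $zg'$ term from expanding $(R+\vep z)^{-1}$, and of $\tfrac{k}{\vep^2}\cN$ are $O(k)$ or smaller. Inverting $M$ on $X^\perp$, where $\tfrac{k}{\vep^2}L$ dominates the identity, extracts the displayed leading-order term for $v_{n+1}$ (the $g''$-correction filtered through $M^{-1}$), while the $O(\vep^2/k)$ gain from $M^{-1}$ converts the $O(k)$-and-smaller remainders, and the $v_n$ term, into the stated error $c(\vep^2/k)\|v_n\|_R+O(k^2)$.

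Finally the sequence statement is an induction on $n$: the base case is the hypothesis $v_0\in B_{H^1_R}(ck)$, and in the inductive step the per-step bound just derived carries the prefactor $\vep^2/k=\vep^{2-s}\ll1$ on the $\|v_n\|_R$ term, so the radius $ck$ is reproduced and the errors stay bounded over the $O(1/k)$ steps down to $R_N\approx1$; substituting $R-R_n=-k/R+O(k^2/\vep^2)$ into (\ref{LO_BE-iteration}) and retaining the $-1/R$ and $-b_1 k^2/(\vep^2 R^3)$ terms gives (\ref{LO_BE-bigstep}). I expect the main obstacle to be the self-consistency of this $R$-feedback: one must simultaneously control the residual $\cF$ after the projection and the sensitivity of $\Pi$, of $g_n$, and of the weight to $R=\hRbe(v)$, so that the $O(\vep^2/k)$ damping from $M^{-1}$ is not overwhelmed by the $O(k/\vep)$ and $O(k/\vep^2)$ losses incurred in differentiating those $R$-dependent terms — keeping the net Lipschitz constant $o(1)$ and the per-step error $O(k)$ is precisely what the restriction $1<s<2$ buys.
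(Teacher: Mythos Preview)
Your strategy matches the paper's: set up $\cGbe$ as a self-map on $B_{H^1_R}(k)$, use the coercivity estimate (\ref{BE-H1-coercivity}) both for the range bound (via (\ref{BE-coercivity})) and for the contraction, read off the leading form of $v_{n+1}$ by inverting $M\approx \tfrac{k}{\vep^2}L$ on $X^\perp$, and propagate inductively. The self-mapping, the expansion of $\Pi\cF$, and the induction are handled as in the paper.

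The one genuine gap is precisely the step you flag as the ``main obstacle.'' Your sentence ``composing with the uniform $L^2_R$-boundedness of $M^{-1}$ and the bound $\|M^{-1}\|_{L^2_R\to H^1_R}\le c\vep^2/k$'' does not close the argument: the $\vep^2/k$ gain is available only on the range of $\Pi$, while $\cGbe(v)-\cGbe(\tv)=M^{-1}\Pi\cF-\tM^{-1}\tPi\tcF$ mixes two \emph{different} projections and two \emph{different} operators (both $M$ and $\Pi$ move with $R=\hRbe(v)$ through $f'(g)$ and $g'$). The $L^2_R\to L^2_R$ bound on $M^{-1}$ alone gives no $H^1_R$ control, so you cannot simply trade between the two bounds.

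The paper resolves this by an explicit factorization of the projected inverses. Setting $\cgBE:=(M^{-1}\Pi-\tM^{-1}\tPi)\cF$, one applies $M$ and uses the identity $\tM\tM^{-1}\tPi+(I-\tPi)=I$ to obtain
\[
\cgBE = M^{-1}\Pi(\tM-M)\tM^{-1}\tPi\cF \;+\; M^{-1}\Pi(\Pi-\tPi)\cF \;+\; (\Pi-\tPi)\tM^{-1}\tPi\cF,
\]
in which every inverse appears sandwiched with its \emph{own} projection, so (\ref{MPi-est}) applies term by term. Combining with $\|\tM-M\|_{R*}$, $\|\Pi-\tPi\|_{R*}\le c\,|R-\tR|/\vep\le c(k^2/\vep^2)\|v-\tv\|$ (from (\ref{BE-ortho-Lip}) with $\delta=k$) and with the separate bound $\|\tM^{-1}\tPi(\cF-\tcF)\|_{H^1_R}\le c\,(\vep^2/k)\cdot(k^2/\vep^2)\|v-\tv\|$ yields a Lipschitz constant $c(k^3/\vep^2+k)$, which is $o(1)$ throughout $1<s<2$. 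This is the missing algebraic device; once you insert it, the rest of your outline goes through unchanged.
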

\begin{proof}
We have established the mapping property. To establish the contractivity we must control the impact of $f$ upon the projection $\Pi$ through
the motion of the front $R$. We assume that $v,\tv,v_n\in B_{H^1_R}(k)$ and denote 
$R=R(v)$ and $\tR=R(\tv)$, with the associated front profiles denoted by $g$ and $\tg$. The estimate (\ref{BE-ortho-Lip}) establishes
that $\hRbe$ is Lipschitz with constant $ck\delta/\vep$, which in the the large time-step regime reduces to $c k^2/\vep$. Following the
proof of (\ref{BE-ortho-Lip}) we find that
\beq
\label{BE:cF-lip}
\|\cF(v)-\cF(\tv)\|_{H^1_R} \leq c \frac{k^2}{\vep^2} \|v-\tv\|_{H^1_R}. 
\eeq
In the large time-step regime, using (\ref{BE-H1-coercivity}) we deduce the bound
\beq
\label{MPi-est}
\|M^{-1}\Pi f\|_{H^1_R} \leq \alpha^{-1} \frac{\vep^2}{k} \|f\|_{R}
\eeq
We wish to obtain a bound on the difference of $\cGbe$ at two values of $v$:
\[
\cGbe(v,v_n)-\cGbe(\tv,v_n) =  M^{-1}\Pi\cF - \tM^{-1}\tPi\tcF.
\]
We first bound the difference
\beq
\label{BE-step1}
  \cgBE:=(M^{-1}\Pi -\tM^{-1}\tPi)\cF.
 \eeq
 The analysis is complicated by the fact that $M$ is only uniformly invertible on the range of $\Pi$. To factor these projected inverses
 we act with $M$, observing
 \beq
 \label{BE-step2}
 M\cgBE = (\Pi-M \tM^{-1}\tPi)\cF = (\Pi \tM-M)\tM^{-1}\tPi\cF +\Pi(I-\tPi)\cF,
 \eeq
 where we used that fact that $\tM\tM^{-1}\tPi = \tPi$ and hence $\tM\tM^{-1}\tPi + (I-\tPi)=I.$
 Since the right-hand side of (\ref{BE-step2}) lies in the range of $\Pi$ we may invert boundedly,
 \beq
 \label{BE-step3}
 \Pi \cgBE = M^{-1}\Pi (\tM-M)\tM^{-1}\tPi\cF + M^{-1}\Pi(I-\tPi)\cF.
 \eeq
 To recover the whole $\cgBE$ we act with $(I-\Pi)$ on (\ref{BE-step1}) obtaining
 \beq
 \label{BE-step4}
 (I-\Pi) \cgBE = -(I-\Pi)\tM^{-1}\tPi\cF= 
  -(\tPi-\Pi)\tM^{-1}\tPi \cF.
 \eeq
 Adding (\ref{BE-step4}) to (\ref{BE-step3}) yields a regularized expression that accounts for the
 shifts in the projections
 \beq
 \label{BE-step5}
 \cgBE = M^{-1}\Pi (\tM-M)\tM^{-1}\tPi\cF + M^{-1}\Pi(\Pi-\tPi)\cF + (\Pi-\tPi)\tM^{-1}\tPi \cF.
 \eeq
The operators $M^{-1}\Pi$ and $\tM^{-1}\tPi$ are bounded using (\ref{MPi-est}), while
\begin{align*}
 \|\tM-M\|_{R*} &= \frac{k}{\vep}\|f'(g)-f'(\tg)\|_{R*} \leq c \|g-\tg\|_{R*},\nonumber\\
                        & \leq c \frac{|R-\tR|}{\vep}\leq c \frac{k^2}{\vep^2} \|v-\tv\|_{H^1_R},
\end{align*}
where $\|\cdot\|_{R*}$ denotes the operator norm from $L^2_R$ into itself. The projections satisfy
\begin{align*} 
\| (\Pi-\tPi)f\|_{R} &= \|g'\langle g',f\rangle_R - \tg'\langle \tg',f\rangle\|_R,\nonumber\\
                           & \leq c \frac{k^2}{\vep^2} \|v-\tv\|_{H^1_R} \|f\|_R + c\frac{\vep^2}{k} \frac{k^2}{\vep^2} \|v-\tv\|_{H^1_R}
\end{align*}
Applying these estimates to (\ref{BE-step5}) and using (\ref{BE-coercivity}) to estimate $\Pi\cF$ we obtain
\begin{align}
\|\cgBE\|_{H^1_R} \leq &c\left( \frac{\vep^4}{k^2} \frac{k^2}{\vep^2} +\frac{\vep^2}{k}\frac{k^2}{\vep^2}\right) \|v-\tv\|_{H^1_R}\|\Pi\cF\|_{L^2}, \nonumber\\
   & \leq c\left(\vep^2 + k\right) \frac{k^2}{\vep^2}\|v-\tv\|_{H^1_R}.
 \label{BE:g-step6}
 \end{align}
Finally we write
\[
\cGbe(v,v_n)-\cGbe(\tv,v_n) = \cgBE + \tM^{-1}\tPi(\cF-\tcF),
\]
and using (\ref{BE:cF-lip}), (\ref{BE:g-step6}) estimate
\begin{align*}
\|\cGbe(v,v_n)-\cGbe(\tv,v_n)\|_{H^1_R} \leq &c \left(\frac{k^3}{\vep^2}+k\right) \|v-\tv\|_{H^1_R},
\end{align*}
which is contractive so long as $k\ll \vep^\frac{2}{3}$ which holds with the large time-step regime.

Within the large time-step regime the leading order iteration (\ref{LO_BE-iteration2}) simplifies as $k\ll k^2/\vep^2$
and the dominant correction is given by the $b_1$ term. To compare to standard notation we rewrite the regime as
$\vep^2\ll k=\delta \ll 1$ and replace the internal parameter $\delta$ with $k$, the result is the large time-step interation (\ref{LO_BE-bigstep}).
\end{proof}

\subsection{Eyre-type iterations}
\label{s:Eyre_Type}
For an Eyre iteration the map (\ref{e:It-eq}) takes the form
\begin{align}
\label{E:It-map}
v+\frac{k}{\vep^2}L_+v =v_n-(g-g_n)+\frac{kg'}{\vep(R+\vep z)}+\frac{k}{\vep^2}\left(\cR-\cN\right) ,
\end{align}
where we have introduced the Eyre linear operator
\beq
\label{L+-def}
L_+:= -\left(\partial_z^2 + \frac{ \vep}{R+\vep z} \partial_z\right) + f'_+(g)=-\frac{1}{R+\vep z}\partial_z\left((R+\vep z)\partial_z\right) +f_+'(g),
\eeq 
the explicit-term residual
\[
\cR(v,v_n):= f_-(g_n) -f_-(g)+f'_-(g_n)v_n,
\]
and the nonlinearity
\[
\cN(v,v_n):= \cN_+(v)-\cN_-(v_n),
\]
which we further decompose into implicit and explicit parts
\begin{align*}
\cN_+(v)&:= f_+(g+v)-(f_+(g)+f_+'(g)v),\\
\cN_-(v_n)&:= f_-(g_n+v_n)-(f_-(g_n)+f_-'(g_n)v_n).
\end{align*}

The operator $L_+$ is self-adjoint in the weighted inner product
for which the eigenvalue problem takes the form
\[
  L_+\psi = \frac{\lambda}{R+\vep z} \psi,
\]
subject to $\partial_z\psi(-R/\vep)=0$ and $\psi\to0$ as $z\to\infty.$
The coercivity estimate is substantially simpler than for BE as the operator $L_+$
is strictly positive without constraint.
\begin{lemma}
There exists $\alpha_+>0$, independent of $R\geq 1$, such that
\beq
\label{E: H1-coercivity}
\langle L_+v,v\rangle_R\geq \alpha_+\|v\|_{H^1_R}^2.
\eeq
for all $v\in H^1_R$. 
\end{lemma}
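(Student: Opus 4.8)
The plan is to prove $L^2_R$-coercivity of $L_+$ first and then bootstrap to $H^1_R$ exactly as in Lemma~\ref{l:BE-coercivity}; because $f_+'\ge 0$ the argument is considerably softer than for $L$, and in particular no orthogonality constraint is needed. Integrating by parts in the weighted inner product (the boundary term at $z=-R/\vep$ vanishes because the weight $R+\vep z$ does, and the term at $z=+\infty$ vanishes by decay) gives the identity
\[
\langle L_+v,v\rangle_R=\int_{-R/\vep}^{\infty}(R+\vep z)\bigl(|v_z|^2+f_+'(g)|v|^2\bigr)\,dz ,
\]
so that the whole content is a weighted Poincar\'e-type estimate $\|v\|_{L^2_R}^2\le C\langle L_+v,v\rangle_R$ with $C$ independent of $R\ge1$ (and of $\vep$ small).

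First I would record two facts about the potential $f_+'(g)$. Since $f$ is odd with simple zeros at $0,\pm1$ and $f\to\pm\infty$, we have $f'(\pm1)>0$; combined with $f_-'\ge0$ and $f_+'=f'+f_-'$ this yields $f_+'(\pm1)>0$, while $f_+'\ge0$ everywhere by the Eyre splitting hypothesis. By continuity of $f_+'$ and $g(z)\to\pm1$ there are constants $c_0>0$ and $M>0$, depending only on $f$, with $f_+'(g(z))\ge c_0$ for $|z|\ge M$; for $z<-1/\vep$ this is automatic since $g\equiv-1$ there after the exponentially small modification, so $M$ is $\vep$-independent.

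The Poincar\'e estimate is then split into the tails $|z|\ge M$, where the potential does the job directly, $\int_{|z|\ge M}(R+\vep z)|v|^2\le c_0^{-1}\langle L_+v,v\rangle_R$, and the compact core $|z|\le M$. On $[-2M,2M]$, for $\vep\le\vep_0:=1/(4M)$ and $R\ge1$ one has $R+\vep z\in[R/2,3R/2]$, so the weight is comparable to $R$ uniformly there. Hence the elementary inequality $|v(z_1)|^2\le 2|v(z_2)|^2+cM\int_{-2M}^{2M}|v_z|^2\,dz$, integrated over $z_1\in[-M,M]$ and averaged over $z_2\in[M,2M]$, gives $\int_{-M}^{M}|v|^2\le c\bigl(\int_{M}^{2M}|v|^2+M^2\int_{-2M}^{2M}|v_z|^2\bigr)$; multiplying by the weight ($\asymp R$) and using $\int_{M}^{2M}|v|^2\le c_0^{-1}\int_M^{2M}f_+'(g)|v|^2$ to re-express the first term, all powers of $R$ cancel and one obtains $\int_{-M}^{M}(R+\vep z)|v|^2\le C(M,c_0)\langle L_+v,v\rangle_R$. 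Adding the two regions yields $\|v\|_{L^2_R}^2\le C\langle L_+v,v\rangle_R$ with $C$ independent of $R$ and $\vep$. Finally, writing $\langle L_+v,v\rangle_R=\tfrac12\langle L_+v,v\rangle_R+\tfrac12\langle L_+v,v\rangle_R$, bounding the first half below by $\tfrac12\int(R+\vep z)|v_z|^2\,dz$ and the second by $\tfrac1{2C}\|v\|_{L^2_R}^2$, produces (\ref{E: H1-coercivity}) with $\alpha_+=\min\{1/2,1/(2C)\}$.

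The computations are routine; the only point needing care — and the reason the constant is uniform in $R$ — is the core region $|z|\le M$, where $f_+'(g)$ may be small or vanish and one must rely on the gradient term: it is precisely the observation that the polar weight $R+\vep z$ is comparable to $R$ throughout a fixed neighbourhood of the interface (for $\vep$ small) that makes the local Poincar\'e constant $R$-independent. One could instead deduce the estimate from strict positivity of the Schr\"odinger operator $-\partial_z^2+f_+'(g)$ on $L^2(\R)$ — its essential spectrum lies in $[\min\{f_+'(\pm1)\},\infty)$ and it has no eigenvalue at $0$ because $f_+'(g)\ge0$ forces any null state to be constant, hence zero — but transferring that bound to the weighted truncated domain reduces again to the same core estimate, so the direct route above is preferable.
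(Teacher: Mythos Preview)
Your proof is correct and complete, but it takes a different route from the paper. The paper argues spectrally: since $f_+'\ge 0$, the quadratic form $\langle L_+\psi_0,\psi_0\rangle_R=\int (R+\vep z)(|\partial_z\psi_0|^2+f_+'(g)|\psi_0|^2)\,dz$ is strictly positive on the normalized ground state $\psi_0$, so $\lambda_0^+>0$, which gives $L^2_R$ coercivity with $\alpha_+=\lambda_0^+$; the bootstrap to $H^1_R$ is then identical to Lemma~\ref{l:BE-coercivity}. You instead prove a weighted Poincar\'e inequality directly, splitting into the tails $|z|\ge M$ where $f_+'(g)\ge c_0$ does the work, and the core $|z|\le M$ where a local Poincar\'e inequality transfers control from the gradient and the nearby tail. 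Your approach buys an explicitly uniform constant in $R\ge1$ and $\vep\le\vep_0$: the core argument makes transparent that the polar weight $R+\vep z$ is comparable to $R$ on a fixed interval around the interface, so the resulting constant is manifestly $R$-independent. The paper's argument is shorter and more conceptual, but it leaves the uniformity of $\lambda_0^+$ in $R$ and $\vep$ implicit --- your closing paragraph in fact identifies exactly the missing step one would need to fill that in, and correctly observes that it reduces to the same core estimate.
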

\begin{proof}
Since $f_+'\geq0$ the normalized ground-state eigenfunction $\psi_0$ of $L_+$, satisfies
\[
\lambda_0^+ = \langle L_+ \psi_0, \psi_0 \rangle_R = \int_{-R/\vep}^\infty \left((\partial_z\psi_0)^2+f_+'(g)\psi_0^2\right)(R+\vep z)dz >0.
\]
Since the ground-state eigenvalue is strictly positive, this establishes the $L^2_R$ coercivity of $L_+$ with $\alpha_+=\lambda_0^+$. The $H^1_R$ coercivity follows as in Lemma 1.
\end{proof}

We assume throughout our analysis that $\|v\|_{H^1_R}$ and $\|v_n\|_{H^1_R}$ are uniformly bounded by $\delta\ll 1$.
We denote the right-hand side of (\ref{E:It-map}) by $\cFE$ and
introduce
\[
M_+ := I+ \frac{k}{\vep^2} L_+,
\]
which is strictly contractive on the full space $L^2_R$, and re-write the Eyre iteration as
\beq\label{E:C-map}
v = \cGE(v,v_n,R-R_n):=M_+^{-1}\Pi \cFE(v,v_n,R-R_n).
\eeq
For the Eyre iteration the role of the projection $\Pi$ is diminished as $M_+$ is contractive without it. 
Our goal is to show the existence of a map $R=\hRE(v,v_n,R_n)$, for which 
\beq
\label{E-orthog}
\langle \cFE,g'\rangle_{R}=0,
\eeq
to establish the contractive mapping properties of $\cGE$, and to 
develop asymptotic formula for $R$ and $v$.  We do this in the long time-stepping regime, $k\gg \vep^2$, which
has no lower bound for the Eyre scheme.

\begin{lemma} Assume $k\gg \eps^2$. There exists a smooth function 
$\hRE: B_{H^1_R}(\delta)\times B_{H^1_R}(\delta)\times\R\mapsto \R$ such that the profile $g=g(z;R)$ satisfies 
(\ref{E-orthog}). The function $R=\hRE$ satisfies the implicit relation
\beq
R-R_n = -\frac{\vep^2}{c_-R}  +O\left(\vep^3,\delta\vep,\frac{\vep^4}{k} \right).
\label{ELO-iteration}
\eeq
where we have introduced the leading order Eyre time constant
\beq
\label{E-c1-def}
c_-:= \frac{\langle f_-'(g)g',g'\rangle_{R}}{\|g'\|_R^2} > 0
\eeq
when $ f_-' \not \equiv 0$.
Moreover we have the Lipshitz estimate
\beq
\label{E-ortho-Lip}
|\hRE(v;v_n,R_n)-\hRE(\tv;v_n,R_n)| \leq c \vep\delta \|v-\tv\|_{R},
\eeq
so long as $\delta\ll 1$.
\end{lemma}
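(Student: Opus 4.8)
\emph{Proof sketch.} The plan is to follow the proof of the preceding Backward Euler lemma for $\hRbe$, the one structural change being that in the Eyre orthogonality relation \eqref{E-orthog} the leading balance is between the curvature term $kg'/(\vep(R+\vep z))$ and the explicit residual $\cR$, rather than between the curvature term and $g-g_n$ as for Backward Euler; this is exactly what forces the anomalously small displacement $R-R_n=O(\vep^2)$. Set $\mu:=(R-R_n)/\vep$ and let $\Phi(R):=\langle \cFE(v,v_n,R-R_n),g'\rangle_R$ with $\cFE$ the right-hand side of \eqref{E:It-map}; this is smooth in $R$ (the dependence entering through $g_n=g(\cdot+\mu)$, through the explicit $R$ in $g'/(R+\vep z)$, and through the weight $R+\vep z$ and truncated domain defining $\langle\cdot,\cdot\rangle_R$), and we must solve $\Phi(R)=0$ for $R=\hRE(v,v_n,R_n)$.

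First I would expand $\Phi$ in powers of $\mu$ exactly as in the $\hRbe$ computation, using the orthogonality conditions \eqref{othog-cond}. From $\langle v_n,g_n'\rangle_R=0$ and $g_n'=g'+\mu g''+O(\mu^2)$ one gets $\langle v_n,g'\rangle_R=O(\delta\mu)$; Taylor expansion gives $\langle g-g_n,g'\rangle_R=-\mu\|g'\|_R^2+O(\mu^2)$ (parity making $\langle g'',g'\rangle_R=O(\vep)$); the curvature term evaluates exactly to $\tfrac{k}{\vep}\langle g'/(R+\vep z),g'\rangle_R=\tfrac{k}{\vep R}\|g'\|_R^2$; and writing $f_-(g_n)-f_-(g)=f_-'(g)g'\mu+O(\mu^2)$ with $f_-'(g_n)=f_-'(g)+O(\mu)$ yields $\langle \cR,g'\rangle_R=\mu c_-\|g'\|_R^2+O(\delta,\mu^2)$ with $c_-$ as in \eqref{E-c1-def}, while $|\langle \cN,g'\rangle_R|\le c\delta^2$ since $\cN$ is quadratic. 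Dividing $\Phi=0$ by $\tfrac{k}{\vep}\|g'\|_R^2$, the two $O(k/\vep)$ terms dominate and give the leading balance $\tfrac1R+\tfrac{c_-\mu}{\vep}=0$, i.e.\ $\mu_0=-\vep/(c_-R)$; since $f_-'\ge0$ and $f_-'\not\equiv0$ we have $c_->0$, so $\mu_0$ is well defined and $\mu=O(\vep)$. Feeding $\mu=O(\vep)$ back into the other contributions to the divided equation --- $O(\vep^2/k)$ from $g-g_n$, $O(\vep)$ from the $O(\mu^2)$ part of $\cR$, $O(\delta/\vep)$ from the $f_-'(g_n)v_n$ term in $\cR$, and $O(\delta^2/\vep)$ from $\cN$ --- gives $\mu=-\vep/(c_-R)+O(\vep^2,\delta,\vep^3/k)$, hence $R-R_n=\vep\mu=-\vep^2/(c_-R)+O(\vep^3,\delta\vep,\vep^4/k)$, which is \eqref{ELO-iteration}.

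For existence, uniqueness and smoothness of $\hRE$, rather than invoke the implicit function theorem abstractly I would recast $\Phi(R)=0$ as a fixed-point equation $R=R_n-\vep^2/(c_-R)+(\text{higher-order in }R)$ on a small neighbourhood of $R_n-\vep^2/(c_-R_n)$: the $R$-derivative of every term other than the one linear in $\mu$ is $O(\vep^2)$ or smaller, while that term has an $O(1)$ coefficient, so the map is a uniform contraction in $R$, uniformly over $v,v_n\in B_{H^1_R}(\delta)$ and $R\ge1$; this gives a unique $R=\hRE(v,v_n,R_n)$, and smoothness in $(v,v_n,R_n)$ then follows from the implicit function theorem since $\partial_R\Phi=\tfrac{k}{\vep^3}c_-\|g'\|_R^2+(\text{lower order})$ is bounded away from zero. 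For the Lipschitz estimate \eqref{E-ortho-Lip} I would subtract $\Phi(\hRE(v);v)=0=\Phi(\hRE(\tv);\tv)$ and split the difference into an $R$-variation and a $v$-variation: since $\Phi$ depends on $v$ only through $\cN_+(v)$, the $v$-variation at fixed $R$ is $-\tfrac{k}{\vep^2}\langle \cN_+(v)-\cN_+(\tv),g'\rangle_R$, bounded by $c\tfrac{k\delta}{\vep^2}\|v-\tv\|_R$ via the quadratic Lipschitz bound $\|\cN_+(v)-\cN_+(\tv)\|_R\le c\delta\|v-\tv\|_R$; by the mean value theorem the $R$-variation equals $\partial_R\Phi(\xi)(\hRE(v)-\hRE(\tv))$ with $|\partial_R\Phi(\xi)|\ge c^{-1}\tfrac{k}{\vep^3}$, and combining the two yields $|\hRE(v)-\hRE(\tv)|\le c\vep\delta\|v-\tv\|_R$.

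The main obstacle is the two-sided control of $\partial_R\Phi$: one must verify that differentiating $\langle \cR,g'\rangle_R$ in $R$ genuinely produces the nonvanishing principal part $\tfrac{k}{\vep^3}c_-\|g'\|_R^2$ with no cancellation, and that every competing $R$-derivative --- of the weight $R+\vep z$, the truncated domain $(-R/\vep,\infty)$, the $O(e^{-1/\vep})$ modification of $g$, and the $g-g_n$ term --- is of strictly lower order; here the hypothesis $k\gg\vep^2$ is precisely what lets the $\cR$-derivative dominate the $(g-g_n)$-derivative. One must also confirm that the crude bound $\langle f_-'(g_n)v_n,g'\rangle_R=O(\delta)$ cannot be improved away, since it is this term that pins the $O(\delta\vep)$ error in \eqref{ELO-iteration}; the remainder is the Taylor bookkeeping already carried out for $\hRbe$.
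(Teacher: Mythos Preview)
Your proposal is correct and follows essentially the same route as the paper's proof: the same term-by-term expansion of $\langle \cFE,g'\rangle_R$ (the paper records these as \eqref{E:vn-est}--\eqref{cN-est}), the same identification of the dominant balance between the curvature term and the linear-in-$(R-R_n)$ part of $\langle \cR,g'\rangle_R$, and the same observation that the $k/\vep^2$ prefactors cancel in the Lipschitz step so that the bound is governed by $\vep\delta$ rather than $k\delta/\vep$. The only organizational differences are that the paper collects the two linear-in-$(R-R_n)$ contributions into the factor $(1+c_-k/\vep^2)$ on the left of \eqref{ELO-iteration2} rather than dividing through by $k\|g'\|_R^2/\vep$, and for the Lipschitz estimate the paper adds and subtracts $\langle \cN(\tv),g'\rangle_R$ to pick up a self-referential $\delta^2|\hRE(v)-\hRE(\tv)|$ term (absorbed via $\delta\ll1$), whereas your mean-value-theorem argument with the lower bound $|\partial_R\Phi|\gtrsim k/\vep^3$ reaches the same conclusion a bit more directly.
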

\begin{proof}  
Due to parity considerations, we remark that $\|g'\|_R^2=R\|g'\|^2_{L^2(\R)}$, up to exponentially small terms. 
For brevity, and as an element of foreshadowing, we approximate $(R-R_n)$ by $\vep^2$ in the $O$-error terms. 
Addressing the terms in $\cF$ one by one, we record 
\begin{align}
\label{E:vn-est}
\langle v_n,g'\rangle_R &= \langle v_n,(g'-g_n')\rangle_{R}=O(\delta\vep)\\
\label{E:g-gn-est}
\langle g-g_n,g'\rangle_R &=  -\|g'\|_R^2\frac{ (R-R_n)}{\vep}
  + O\left(\vep^3\right),\\
\label{E:g'-est}
\left\langle \frac{g'}{R+\vep z},g'\right\rangle_{R} &=\|g'\|_{L^2(\R)}^2=\frac{\|g'\|_R^2}{R},\\
\label{E:cR-est}
\langle \cR,g'\rangle_{R}&= \frac{\langle f_-'(g)g',g'\rangle_{R}(R-R_n)}{\vep} +\langle f_-'(g)v_n,g'\rangle_R + O(\vep^2,\vep\delta),\\
|\langle \cN,g'\rangle_{R}|&\leq c\delta^2.
\label{cN-est}
\end{align}
With these reductions we can simplify the orthgonality condition, identifying terms that are linear
in $R-R_n$ and most relevant higher order terms. The result is the balance
\beq
\frac{R-R_n}{k}\left(1+\frac{c_-k}{\vep^2}\right) = -\frac{1}{R}-\frac{\langle f_-'(g)g',v_n\rangle_R}{\vep\|g'\|_R^2}  
 +O\left(\vep,\delta, \frac{\delta^2}{\vep}\right),
\label{ELO-iteration2}
\eeq
where $c_-$, introduced in (\ref{E-c1-def}) is positive since $f_-'\geq0$ by assumption. The largest terms and error terms come from 
the residual, and we kept the lower order constant on the left-hand side to emphasize that in the long time-stepping regime, the residual 
dominates the natural time-step term.  Indeed, the iteration is independent of step size, $k$, given at leading order by (\ref{ELO-iteration2}).

To obtain the Lipshitz estimate we observe from the bounds above that dependence of $\hRE$ on $v$ arises from the balance
of the linear $R-R_n$ term in the residual against the nonlinearity. Since both these terms are multiplied by $k/\vep^2$ this factor cancels
and we have the balance
\[
|\hRE(v)-\hRE(\tv)| \leq  \frac{c \vep}{\langle f_-'(g)g',g'\rangle_{R}}\left|\langle \cN(v),g'\rangle_R-\langle \cN(\tv),\tg'\rangle_R\right|.
\] 
The nonlinearity satisfies the Lipshitz properties
\[
  \|\cN(v,v_n)-\cN(\tv,v_n)\|_R\leq c \delta \|v-\tv\|_R,
 \]
 while 
 \[
 \|g'-\tg'\|_{L^2}\leq c\frac{ |\hRE(v)-\hRE(\tv)|}{\vep}.
 \]
 Adding and subtracting $\langle \cN(\tv,v_n),g'\rangle_R$  and using (\ref{cN-est}), we arrive at the estimates
\[
 |\hRE(v)-\hRE(\tv)| \leq c\left( \vep\delta \|v-\tv\|_R + \delta^2 |\hRbe(v)-\hRbe(\tv)|\right).
 \]
 Imposing the condition $\delta\ll 1 $ yields (\ref{E-ortho-Lip}).
\end{proof}

We may now establish the main result on the Eyre sequence.

\begin{prop}
There exists $c>0$ such that for any  $k\gg\vep^2$ the function $\cGE$ defined in (\ref{E:C-map}) 
with $R:=\hRE(v;v_n,R_n)$  maps $B_{H^1_R}(c\vep)\times B_{H^1_R}(c\vep) $ into $B_{H^1_R}(\cGE(0,v_n),\vep^2)$ and 
is a strict contraction, satisfying
\beq
\label{E:cGE-Lip}
\|\cGE(v,v_n)-\cGE(\tv,v_n)\|_{H^1_R} \leq c \vep^2 \|v-\tv\|_{H^1_R}.
\eeq
In particular $\cGE$ has a unique fixed point in that set, which we denote $v_{n+1}$.
Moreover, if the Eyre balance parameter 
\beq
\label{E:bal-param}
\gamma:=  \|L_+^{-1}\Pi \circ f_-'(g)g'\|_{H^1_R*} <1
\eeq
then for any $\rho\in(0,1)$ there exists $c>0$ such that for all $v_0\in B_{H^1_R}(c\vep)$ and $R_0>1$, 
the sequence $\{(v_n,R_n)\}_{n=1}^N$ satisfies $v_n\in B_{H^1_R}(c\vep)$ and
\[
\frac{R_{n+1}-R_n}{\vep^2} = -\frac{c_E}{R_{n+1}}+  O\left(\vep^{1-\rho}\right).
\]
where the Eyre number, $c_E$, is defined by
\beq
\label{Eyre-Iteration}
c_E:= \frac{\|g'\|_R^2}{\langle f_-'(g)g',g'\rangle_R+ \langle K_+L_+^{-1}\Pi f_-'(g)g',f_-'(g)g'\rangle_R}>0,
\eeq
where $K_+>0$ is defined in (\ref{E:K-def}) and $K_+L_+^{-1}\Pi>0$ is self-adjoint.
\end{prop}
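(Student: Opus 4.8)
The plan is to run a contraction-mapping argument on $\cGE$ in the $H^1_R$ ball of radius $c\vep$, exactly parallel to the BE case but simpler because $M_+$ is invertible on the full space without projection. First I would bound $\cGE(0,v_n)$: apply $M_+$ to (\ref{E:C-map}), take the $L^2_R$ inner product with $\cGE$, and use the coercivity estimate (\ref{E: H1-coercivity}) together with $M_+ = I + (k/\vep^2)L_+$; in the long time-step regime $k\gg\vep^2$ the term $(k/\vep^2)\alpha_+\|\cGE\|_{H^1_R}^2$ dominates the left side, so dividing through gives $\|\cGE\|_{H^1_R}\le c(\vep^2/k)\|\Pi\cFE\|_R$. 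Since $\Pi$ kills the leading-order $g-g_n = g'(R-R_n)/\vep+\dots$ term, and since $R-R_n = O(\vep^2)$ from (\ref{ELO-iteration}), the surviving pieces of $\cFE$ are $O(\vep^2/k)$ after multiplication by $k/\vep^2$ — i.e. $\|\cFE\|_R = O(\vep^2 + \delta\vep + \dots)$ and hence $\|\cGE\|_{H^1_R}\le c\vep$ with $\delta = c\vep$. This gives the self-map property onto $B_{H^1_R}(\cGE(0,v_n),\vep^2)$.

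For contractivity I would estimate $\cGE(v,v_n)-\cGE(\tv,v_n) = M_+^{-1}\Pi\cFE - \tM_+^{-1}\tPi\tcFE$ following the same regularized decomposition used for BE in (\ref{BE-step1})--(\ref{BE-step5}): the projection and profile shifts are controlled by the Lipschitz bound (\ref{E-ortho-Lip}), $|\hRE(v)-\hRE(\tv)|\le c\vep\delta\|v-\tv\|_R$, which in the regime $\delta=c\vep$ is $O(\vep^2)\|v-\tv\|_R$. Combined with the bound $\|M_+^{-1}\Pi f\|_{H^1_R}\le \alpha_+^{-1}(\vep^2/k)\|f\|_R$ and the Lipschitz property of $\cN$, all the resulting terms carry at least one factor of $\vep^2$, yielding (\ref{E:cGE-Lip}). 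Banach fixed point then gives the unique $v_{n+1}$.

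The substantive new content is the refined iteration for $R$. The leading order from (\ref{ELO-iteration2}) is $R-R_n = -\vep^2/(c_- R) + \dots$, but the stated conclusion has $c_-$ replaced by the Eyre number $c_E$ in (\ref{Eyre-Iteration}), so I must push the expansion one order further. The point is that $v_{n+1}$ is \emph{not} negligible at the order we need: from the fixed-point formula $v_{n+1}\approx -M_+^{-1}\Pi\,(k/\vep^2)\cR \approx -L_+^{-1}\Pi f_-'(g)g'\cdot(R-R_n)/\vep$ to leading order, and substituting this back into the term $\langle f_-'(g)v_n,g'\rangle_R$ (more precisely its $n{+}1$ analog) in the orthogonality condition produces the additional self-adjoint contribution $\langle K_+L_+^{-1}\Pi f_-'(g)g',f_-'(g)g'\rangle_R$ in the denominator. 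So the key steps are: (i) solve the fixed-point relation for $v_{n+1}$ to leading order in terms of $R-R_n$; (ii) insert this into (\ref{E-orthog}); (iii) collect the terms linear in $R-R_n$, which now includes both the direct residual term and the feedback through $v_{n+1}$, to obtain the coefficient $c_E$; (iv) check that the error is $O(\vep^{1-\rho})$ by using the balance hypothesis $\gamma<1$ to ensure the geometric-series-type expansion of the implicit relation converges and the neglected nonlinear/higher-order terms are genuinely smaller. The main obstacle I anticipate is step (iii)--(iv): making rigorous the claim that the feedback of $v_{n+1}$ through the residual modifies the effective time constant at exactly the order claimed, while controlling $K_+$ (defined in the as-yet-unseen (\ref{E:K-def})) and verifying self-adjointness and positivity of $K_+L_+^{-1}\Pi$ — this requires carefully tracking which pieces of $\cN$ and $\cR$ enter at order $\vep^2$ versus $\vep^{3-\rho}$, and is exactly where the hypothesis $\gamma<1$ must be invoked to close the estimates uniformly in $n$.
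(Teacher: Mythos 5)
Your first half (self-map and contractivity of $\cGE$, hence the unique fixed point $v_{n+1}$) is essentially the paper's argument: the same BE-style regularized decomposition of $M_+^{-1}\Pi-\tM_+^{-1}\tPi$, the Lipschitz bound (\ref{E-ortho-Lip}), and the bound on $M_+^{-1}\Pi$ in the regime $k\gg\vep^2$, with $\delta=c\vep$. One small slip there: the residual $\cR$ enters the right-hand side of (\ref{E:It-map}) with a plus sign, so the leading-order fixed point is $v_{n+1}\approx \frac{R-R_n}{\vep}L_+^{-1}\Pi f_-'(g)g' + L_+^{-1}\Pi f_-'(g)v_n$; your extra minus sign would flip the $K_+$ correction in the denominator of $c_E$ and change the qualitative conclusion (slow-down versus speed-up), so the bookkeeping matters.

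The genuine gap is exactly where you flag it, in steps (iii)--(iv), and the issue is structural, not just technical. The orthogonality condition (\ref{ELO-iteration2}) that determines $R_{n+1}$ contains $v_n$, which is \emph{data} from the previous step; within a single step there is no implicit relation whose ``geometric-series expansion'' can generate $K_+$. The Neumann series only appears by unrolling the recursion $v_{j+1}\approx \frac{R_{j+1}-R_j}{\vep}L_+^{-1}\Pi f_-'(g)g'+L_+^{-1}\Pi f_-'(g)v_j$ backwards over many time steps, and making that legitimate is the content of the paper's proof: (a) a Lipschitz estimate for $\cGE$ in its \emph{second} argument with constant $\gamma+O(\vep^2/k)<1$ as in (\ref{E:cGE-Lip2}) — this, not a static series, is where $\gamma<1$ enters; (b) an induction giving $v_n\in B_{H^1_R}(c\vep)$ and $R_j-R_{j-1}=O(\vep^2)$ uniformly, so that over a window of $N_\rho=\vep^{-\rho}$ steps the radius drifts only by $O(\vep^{2-\rho})$ and the quasi-equilibrium $v_{n*}=\frac{R_n-R_{n-1}}{\vep}E_n$, with $E$ defined through $K_+$ in (\ref{E:E-def})--(\ref{E:K-def}), is nearly constant; (c) iterating the $v$-slot contraction over that window with $\gamma^{N_\rho}\ll\vep$ to get $\|v_n-v_{n*}\|_{H^1_R}=O(\vep^{2-\rho})$ for $n>N_\rho$. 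The exponent $\rho$ and the stated error $O(\vep^{1-\rho})$ come precisely from this trade-off (the $O(\vep^{2-\rho})$ misfit of $v_n$ divided by the $\vep$ in the term $\langle f_-'(g)g',v_n\rangle_R/(\vep\|g'\|_R^2)$); a static substitution of the quasi-equilibrium formula for $v_n$ cannot produce it, and for an arbitrary $v_0\in B_{H^1_R}(c\vep)$ the asserted $R$-iteration is simply false during the initial transient, which is why the conclusion is only recovered after the relaxation window. Without this dynamic (quasi-steady) argument the identification of $c_E$ is a formal ansatz rather than a proof; supplying it would complete your outline along the paper's lines.
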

\begin{proof}
To establish the contractivity of $\cGE$ we follow the arguments for backward Euler, sketching only the differences.
We introduce
\beq
\label{E-step2a}
  \cgE:=(M_+^{-1}\Pi -\tM_+^{-1}\tPi)\cFE;
 \eeq
 and derive the expression
\begin{align}
\cgE  &= M_+^{-1}\Pi (\tM_+-M_+)\tM_+^{-1}\tPi\cFE + \nonumber\\
                  &\hspace{0.5in} M_+^{-1}\Pi(\Pi-\tPi)\cFE + (\Pi-\tPi)\tM_+^{-1}\tPi \cFE.
 \label{E-step2b}
 \end{align}
The operators $M_+^{-1}\Pi$ and $\tM_+^{-1}\tPi$ are bounded as $L_+$ has no small eigenvalues.
Using (\ref{E-ortho-Lip}) we estimate
\begin{align*}
 \|\tM_+-M_+\|_{R*} &= \frac{k}{\vep}\|f_+'(g)-f'_+(\tg)\|_{R*} \leq c \|g-\tg\|_{R*},\nonumber\\
                        & \leq c\frac{|R-\tR|}{\vep}\leq c \delta \|v-\tv\|_{H^1_R}.
\end{align*}
Similarly the projections satisfy
\begin{align*} 
\| (\Pi-\tPi)\|_{R*} &= \|g'\langle g',\cdot \rangle_R - \tg'\langle \tg',\cdot \rangle\|_R,\nonumber\\
                           & \leq c \delta \|v-\tv\|_{H^1_R}
\end{align*}
Applying these estimates to (\ref{E-step2a},\ref{E-step2b}) and following the proof of (\ref{E-ortho-Lip}) to estimate $\Pi\cFE$ we obtain
\begin{align}
\|\cgE\|_{H^1_R} \leq &c\frac{\vep^2}{k}\delta \|v-\tv\|_{H^1_R}\|\Pi\cF\|_{L^2},\nonumber \\
   & \leq c\left(\frac{\vep^2\delta^2}{k} +\frac{\vep^4\delta}{k}+\vep\delta + \delta^2\right)\|v-\tv\|_{H^1_R}.
 \label{E:g-step2}
 \end{align}
Finally we write
\beq
\label{E:cG-master}
\cGE(v,v_n)-\cGE(\tv,v_n) = \cgE + \tM^{-1}\tPi(\cFE-\tcFE),
\eeq
and estimate the $\cFE$ term from which the dominant contribution comes from the residual
\begin{align*}
\|\cFE- \tcFE\|_R &\leq c\frac{k}{\vep^2}\|f_-(g)-f_-(\tg)\|_R \leq c\frac{k}{\vep^2} \frac{|R-\tR|}{\vep}, \nonumber\\
& \leq c \frac{k\delta}{\vep}\|v-\tv\|_R,
\end{align*}
where we used (\ref{E-ortho-Lip}) in the last inequality. In particular we deduce that
\beq
\label{E:g-step3}
  \|\tM_+^{-1}\tPi(\cFE- \tcFE)\|_{H^1_R} \leq c  \vep\delta \|v-\tv\|_R.
 \eeq
Combining (\ref{E:g-step2}), (\ref{E:g-step3}) and (\ref{E:cG-master}), imposing $\delta=\vep$, and using $k\gg\vep^2$ we arrive at
strict contractivity on $B_{H^1_R}(c\delta)$ for any fixed $c>0$.

To establish bounds on the the fixed point $v_{n+1}$ of $\cGE(\cdot; v_n)$ we observe from (\ref{E: H1-coercivity}) that in 
the large time-stepping regime
$$\|M_+^{-1}\Pi\|_{H^1_R*} \leq c \frac{\vep^2}{k}.$$ 
Using this result we expand
\begin{align*}
\Pi\cFE &= \frac{k}{\vep^2}\Pi\left( f_-'(g)g'\frac{R-R_n}{\vep}+f_-'(g)v_n\right)+ O\left( \delta, \vep^2,k \right).
\end{align*}
Inverting $M_+$ we find, at leading order
\[
v_{n+1}=  \frac{R-R_n}{\vep} L_+^{-1}\Pi f'(g)g' + L_+^{-1}\Pi f_-'(g)v_n + O(\vep^2,\delta^2)
\]
In particular we deduce that
\[
\left \|v_{n+1} - \frac{R-R_n}{\vep} L_+^{-1}\Pi f'(g)g'\right\|_{H^1_R} \leq \gamma \|v_n\|_{H^1_R} + O(\vep^2, \delta^2).
\]
Arguing inductively, since the Eyre balance parameter $\gamma<1$ and the functions $\|L_+^{-1}\Pi f_-'(g)g'\|_{H^1_R}$ are uniformly bounded for all $R\geq 1$,
we deduce that if $\delta:=\|v_0\|_{H^1_R}=O(\vep)$ then the sequences $\{(R-R_n)\vep^{-2}\}_{0}^N$ and $\{\vep^{-1}\|v_n\|_{H^1_R}\}_0^N$ 
are uniformly bounded, independent of $\vep\ll1$ and $k\gg\vep^2$ for all $n\leq N$ so long as  $R_n>1$ for all $n=0, \ldots, N.$

To improve this bound we require Lipschitz estimates on the $v_n$ component of $\cGE$. To this end we find
\begin{align}
\label{E:cGE-Lip2}
  \|\cGE(v;v_n)-\cGE(v;\tv_n)\|_{H^1_R} &\leq \|M_+^{-1} \Pi (I+\frac{k}{\vep^2} f_-'(g))\|_{H^1_R*}\|v_n-\tv_n\|_{H^1_R}, \nonumber\\
    &\leq \left(\gamma+O\left(\frac{\vep^2}{k}\right)\right) \|v_n-\tv_n\|_{H^1_R}.
\end{align}
Here we introduce the quasi-steady parameter $\rho\in(0,1).$ Since $|R_n-R_{m}|=O(\vep^{2-\rho})$ for $|n-m|\leq N_\rho:=\leq \vep^{-\rho}$ we infer that 
\[
\left\|L_{+,n}^{-1}\Pi_n f'(g_n)f'_n - L_{+,m}^{-1}\Pi_{m} f_-'(g_{m})g'_{m}\right\|_{H^1_R} \leq c \sqrt{\vep},
\]
for all such $n$ and $m$. For $n>N_\rho$ we define the quasi-equilibrium
\[
  v_{n*} := \frac{R_{n}-R_{n-1}}{\vep} E_n(z) 
 \]
 where $E_n$ is the $R=R_n$ translate of
\beq
\label{E:E-def}
E:=  K_{+}L_{+}^{-1}\Pi f_-'(g) g'.
 \eeq
 Here the self-adjoint operator
 \beq
 \label{E:K-def}
 K_{+}:= \left(I-L_{+}\Pi \circ f_-'(g)\right)^{-1}>0,
 \eeq
 is well defined since $\|L_{+}\Pi\circ f_-'(g)\|_{H^1_R*}=\gamma<1$ by assumption. Using the Lipschitz property (\ref{E:cGE-Lip2})
  of $\cGE$ and the quasi equilibrium relation
  \[
  \| v_{n*} - \cGE(v_{n*};v_{n*})\|_{H^1_R}=O(\vep^2),
  \]
  we deduce that
 \[
 \| v_{k+1}-v_{n*}\|_{H^1_R} \leq \gamma \|v_k-v_{n*}\|_{H^1_R} + O\left(\vep^{2-\rho}\right).
 \]
 for  $k=n-m_\vep, \ldots, n$.
 Since  $\gamma^{N_\rho}\ll \vep$  we deduce from an inductive argument that
\[
 \left\|v_{n}- \frac{R_n-R_{n-1}}{\vep} E_n \right\|_{H^1_R} = O(\vep^{2-\rho}),
\]
for all $n>N_s.$ Inserting this result in (\ref{ELO-iteration}) we arrive at the leading order Eyre iteration (\ref{Eyre-Iteration}). 
\end{proof}

\begin{remark} 
\label{rem:Kpredict}
There are two examples of particular relevance
\[
\label{f1-ex}
  f(u) =u^3-u,
 \]
 with the decomposition $f_+=(1+\beta) u^3$ and $f_-=u+\beta u^3$ for $\beta>0$.
 The choice $\beta=0$  is classical and very degenerate, as in this case $f_-'(u)=1$ and the corresponding Eyre 
 balance parameter $\gamma$, defined in (\ref{E:bal-param}) is zero, and the Eyre number,  (\ref{Eyre-Iteration}) is 1. 
 In this case it is possible to rewrite Eyre's method as backward Euler with a rescaled time. In particular the slow convergence
 to equilibrium will not be in evidence.  For larger values of $\beta$ the balance parameter increases from zero and
 the Eyre number decreases from 1. As the balance parameter increases  through $1$ we anticipate enhanced slowing of the front profile 
 as the Eyre number tends to zero. The choice of non-zero $\beta$ can be viewed as spurious, a deliberate attempt to foul the method.
 A more robust example of non-zero balance arises naturally through the model
 \[
 \label{f2-ex}
 f(u)=u^5-\beta u^3,
 \]
 with $\beta\geq 1.$ This suggests the optimal decomposition $f_+=u^5$ and $f_-=\beta u^3$. Here, unambiguously, increasing
 $\beta$ increases the balance parameter and will lead to non-trivial enhanced slow-down with potential instability as $\gamma$ increases
 through $1.$
These analytic predictions are validated in a computational study below.  
\end{remark}

\begin{remark}
To leading order, in the large time-stepping regime $k\gg\vep^2$, the Eyre iteration recovers backward Euler with the substitution
$k\mapsto c_E\vep^2.$ This reduces to the exact result for the case $f(u)=u^3-u$ and $f_-(u)=u$,
for which $f_-'=1$, as the Eyre constant reduces to $1$ since $\Pi f_-(g)g'=\Pi g' =0.$

The strong contractivity of $\cGE$ with respect to $v$, given in (\ref{E:cGE-Lip}), arises from the strong convexity with respect
to $v$, but the slow evolution and marginal convergence to the quasi-equilibrium, given in (\ref{E:cGE-Lip2}) arises from the balance
between the implicit and explicit terms. The parameter $\gamma$ measures this balance,  with the quasi-equilibrium structure lost as $\gamma$ increases towards 1.
Indeed, since $\|K_+\|_{H^1_R*}\sim (1-\gamma)^{-1}$, the Eyre constant will generically tend to zero as $\gamma\to 1$.
\end{remark}

\subsubsection{Computational Validation of Remark~\ref{rem:Kpredict}}
\label{s:Epoor}

We perform computations for AC with the non-classical $f(u) = u^5 - u^3$ (which also leads to meta-stable dynamics of curvature motion) using the same initial conditions and accuracy criteria as described in Section~\ref{s:ACcomp}. BE performs almost identically to the results shown in Tables~\ref{t:fixede} and~\ref{t:fixeds} for the classical $f(u) = u^3 -u$ in terms of accuracy and variation of time steps with 
$\epsilon$ and $\sigma$. This matches the theory in Section~\ref{s:BEradial} which can be summarized as BE has profile fidelity when $k = o(\epsilon)$. 

When Eyre's method is applied to the dynamics with $f(u) = u^5 - u^3$, with the natural splitting suggested in Remark~\ref{rem:Kpredict}, profile fidelity is lost as predicted. The formal prediction of $k = O(\epsilon^{3/2})$ which was seen computationally for $f(u) = u^3 - u$ in Table~\ref{t:fixeds} is not observed for $f(u) = u^5 - u^3$. Rather, we see  $k = O(\epsilon^2)$ as predicted by the theory in the previous section. The numerical results are shown in Table~\ref{t:fixeds2}.

\begin{table}
\centerline{ 
\begin{tabular}{|c||c|c|} \hline 
&\multicolumn{2}{|c|}{Eyre with $f(u) = u^5-u^3$}\\ \hline
$\epsilon$ & $M$ & $E$ \\ \hline 
0.2  & 5,726 &  0.001 \\ 
0.1  &  21,947 (3.83) & 0.005 \\
0.05 & 86,499 (3.94) & 0.007 \\ 
0.025 &  343,525(3.97)  & 0.007 \\
\hline
\end{tabular}
}
\caption{Computational results for the AC benchmark problem with fixed local error tolerance $\sigma = 10^{-4}$ and $\epsilon$ varied, using Eyre's method with reaction term $f(u) = u^5-u^3$. Here, $M$ is the total number of time steps taken (with the ratio to the value above in brackets) and $E$ is the error in the benchmark time. 
\label{t:fixeds2}
}
\end{table}

\section{Summary and Future Work} 
\label{s:future}

We have identified the time step scaling for several first and second order schemes for AC and CH under the restriction of fixed local truncation error, $\sigma$. In particular, we derive the asymptotic behaviour of time-step number with $\sigma$ and asymptotic parameter $\epsilon$ during meta-stable dynamics.
These predictions are made under the assumption that the time steps preserve the asymptotic structure of the diffuse interface, a concept we refer to as {\em profile fidelity}.
The predictions are verified in numerical experiments.  We see that methods whose dominant local truncation error can be expressed as a pure time derivative have optimal  asymptotic performance in this particular limit. BE, TR, and BDF2 all have this desirable property. We believe these methods will also have superior performance for other problems with metastable dynamics. Our numerical results show that BE performs better than expected and we have shown an explanation of this behaviour with formal asymptotics. 

The optimal fully implicit methods asymptotically computationally outperform all linearly implicit methods in the limit we consider. We present precise criteria on the computational cost of nonlinear solvers for this comparison. The provably energy stable first and second order SAV schemes had higher computational cost than standard IMEX methods for similar results. As a final result, we present a rigorous proof that large time steps with fully implicit BE can be taken with locally unique solutions that are energy stable. This is done for the 2D radial AC equation in meta-stable dynamics. Eyre-type iteration is also considered in this analytic framework, and it is shown that in general this approach loses profile stability unless very small time steps are taken.

Our work gives strong evidence that some fully implicit schemes for phase field models should be given more consideration and that provably energy stable schemes are inaccurate and give no benefits during meta-stable dynamics. Perhaps a hybrid scheme that switches between the two approaches depending on the dynamic regime should be considered. 

Extending the analysis to the non-radial case and to CH is an interesting question. We observed that the question of global accuracy is not trivial in Section~\ref{s:accuracy} and should be considered for other schemes.  Accurate local error estimation for these problems is another interesting question to pursue. 

\section*{Appendix: $L_R^2$ coercivity of $L$}

Here we show the technical argument for Lemma~\ref{l:BE-coercivity}. 
For $u,v\in H^1(\R)$ we define the inner product
$$\langle u, v\rangle_\ell := \int_{-\ell}^\ell u(s)v(s)\,ds,$$
with the standard norms $L^2_\ell$ and $H^1_\ell$ while $L^2_{\ell^c}$ is defined in $\R\backslash[-\ell,\ell].$
Let $L_0$ be as defined in (\ref{L0-def}). 
\begin{lemma}
\label{l:A1}
Fix $\ell_0>0$ sufficiently large there exists $\alpha>0$ such that  for all $\ell>\ell_0$
\beq
\langle L_0 u,u\rangle_\ell \geq \alpha ,
\eeq
for all $u\in H^1_\ell\cap L^2(\R)$ satisfying $\langle u,g'\rangle_{L^2(\R)}=0$ and $1=\|u\|_{L^2_\ell}\geq \|u\|_{L^2_{\ell^c}}.$ 
\end{lemma}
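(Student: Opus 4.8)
The plan is to prove the coercivity estimate by a compactness/contradiction argument, using the known spectral properties of $L_0$ on the whole line $L^2(\R)$ and carefully handling the truncation to $[-\ell,\ell]$. The key point is that the constraint set --- functions with $\langle u,g'\rangle_{L^2(\R)}=0$, unit $L^2_\ell$ norm, and with the bulk of their mass inside $[-\ell,\ell]$ --- is compatible with the coercivity of $L_0$ on $\{g'\}^\perp_{L^2(\R)}$ once $\ell$ is large, because the eigenfunction $g'$ is exponentially localized and $f'(g)\to f'(\pm 1)>0$ exponentially fast as $z\to\pm\infty$.

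First I would argue by contradiction: suppose no such uniform $\alpha>0$ exists. Then there is a sequence $\ell_j\to\infty$ (or $\ell_j$ bounded, but the interesting case is $\ell_j\to\infty$) and functions $u_j\in H^1_{\ell_j}\cap L^2(\R)$ with $\langle u_j,g'\rangle_{L^2(\R)}=0$, $\|u_j\|_{L^2_{\ell_j}}=1\geq \|u_j\|_{L^2_{\ell_j^c}}$, and $\langle L_0 u_j,u_j\rangle_{\ell_j}\to 0$ (or $\leq 1/j$). Writing out $\langle L_0 u_j,u_j\rangle_{\ell_j}=\int_{-\ell_j}^{\ell_j}(|u_j'|^2+f'(g)|u_j|^2)\,ds$ and splitting the potential term as $f'(g)=f'(g)-\mu+\mu$ where $\mu:=\min\{f'(\pm1)\}>0$, the region $|s|>L$ (for $L$ large but fixed) contributes a positive term $\geq (\mu-o(1))\int_{L<|s|<\ell_j}|u_j|^2$, so the energy being small forces both $\int_{|s|>L}|u_j|^2\to 0$ uniformly in $j$ (after letting $L\to\infty$) and $\|u_j'\|_{L^2_{\ell_j}}$ bounded. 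Hence $\|u_j\|_{H^1(\R)}$ is bounded (extending $u_j$ by zero outside $[-\ell_j,\ell_j]$, noting the boundary values are controlled since the mass escapes), and a subsequence converges weakly in $H^1(\R)$ and strongly in $L^2_{\rm loc}$ to some $u_\infty$. The tightness just established upgrades this to strong $L^2(\R)$ convergence, so $\|u_\infty\|_{L^2(\R)}=1$ and $\langle u_\infty,g'\rangle_{L^2(\R)}=0$. By weak lower semicontinuity, $\langle L_0 u_\infty,u_\infty\rangle_{L^2(\R)}\leq \liminf \langle L_0 u_j,u_j\rangle_{\ell_j}=0$. But $L_0$ is uniformly coercive on $\{g'\}^\perp_{L^2(\R)}$ with a positive gap, contradicting $\|u_\infty\|_{L^2(\R)}=1$. (For the alternative case of bounded $\ell_j$, the truncation is a genuine boundary-value problem; there I would instead argue directly that the bottom of the Neumann spectrum of $L_0$ on $[-\ell,\ell]$ restricted to the codimension-one constraint converges, as $\ell\to\infty$, to the spectral gap of $L_0$ on $\R$, which handles the finitely many small $\ell$ uniformly by continuity.)

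The main obstacle is the mismatch between the orthogonality constraint, which is posed on all of $\R$ with respect to $g'$, and the quadratic form, which is only integrated over $[-\ell,\ell]$; one must be sure that the escaping tail mass $\|u_j\|_{L^2_{\ell^c}}$ --- which is \emph{allowed} to be as large as $\|u_j\|_{L^2_\ell}$ and does not enter the energy --- cannot be exploited to make the form small while keeping the $L^2(\R)$ orthogonality. The resolution is exactly the tightness step above: smallness of $\langle L_0 u_j,u_j\rangle_{\ell_j}$ forces $\int_{|s|>L}|u_j|^2$ small for $L$ large \emph{inside} $[-\ell_j,\ell_j]$, and the normalization $\|u_j\|_{L^2_{\ell_j^c}}\leq 1$ then bounds the outside mass so that no mass is lost in the limit. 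A secondary technical point is controlling the boundary terms when extending $u_j$ by zero (or handling the Neumann condition $\partial_z u(-R/\vep)=0$ that appears in the application); since the mass near $\pm\ell_j$ vanishes, this extension introduces only an $o(1)$ error in the $H^1$ norm, which is harmless. Once the contradiction is reached the lemma follows, and as noted in the excerpt this $L^2_R$ coercivity is then bootstrapped to the $H^1_R$ estimate (\ref{BE-H1-coercivity}) by the convexity-splitting argument already given in the proof of Lemma~\ref{l:BE-coercivity}.
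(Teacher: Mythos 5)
Your overall strategy (a compactness/contradiction argument that reduces matters to the known coercivity of $L_0$ on $\{g'\}^{\perp}_{L^2(\R)}$) is genuinely different from the paper's proof, which is direct and quantitative: there the constrained minimizer is expanded in the Neumann eigenbasis of $L_0$ on $[-\ell,\ell]$, the full-line orthogonality together with $\|u\|_{L^2_{\ell^c}}\le\|u\|_{L^2_\ell}$ shows the coefficient along the (possibly slightly negative, $O(e^{-d\ell})$) ground state is exponentially small, and the form is then bounded below by essentially $\lambda_1^\ell>0$. Your route could be made to work, but as written it has a genuine gap at the crucial tightness step.

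The gap: you claim that smallness of $\langle L_0u_j,u_j\rangle_{\ell_j}$, together with $f'(g)\ge\mu-o(1)>0$ for $|s|>L$, forces $\int_{L<|s|<\ell_j}|u_j|^2\to0$. This does not follow from the splitting you give, because $f'(g)$ is \emph{negative} in the interface core (e.g.\ $f'(0)=-1$ for the classical nonlinearity), so the near-region part of the form can a priori be as negative as $-C\int_{-L}^{L}|u_j|^2$ with $C=\max\bigl(-f'(g)\bigr)>0$; the inequality you actually obtain is $\mu\, m_{\mathrm{far}}\le C\, m_{\mathrm{near}}+o(1)$, which yields no smallness of $m_{\mathrm{far}}$ since $m_{\mathrm{near}}\le 1$. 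Ruling out a substantially negative near-region contribution is precisely the nontrivial spectral input of the paper's proof (the Neumann ground-state energy of the truncated operator lies within $O(e^{-d\ell})$ of zero, with ground state close to $g'$), or else requires invoking the orthogonality constraint already at this stage; in your argument orthogonality is used only at the very end, on the limit. Consequently the assertions of strong $L^2(\R)$ convergence and $\|u_\infty\|_{L^2(\R)}=1$ are unjustified. The argument is repairable without full tightness: your potential splitting does show that a fixed positive fraction of the unit mass must remain in a fixed compact set, so the weak limit $u_\infty$ is nonzero; orthogonality passes to the limit because $\|g'\|_{L^2(|s|>L)}$ is exponentially small while $\|u_j\|_{L^2(\R)}^2\le 2$; and discarding the nonnegative far-field part of the form plus lower semicontinuity on $[-L,L]$ gives $\langle L_0u_\infty,u_\infty\rangle_{L^2(\R)}\le 0$, contradicting coercivity on $\{g'\}^{\perp}$. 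You would need to rewrite the middle of your proof along these lines (and you can drop the bounded-$\ell_j$ parenthetical: since the lemma only claims the bound for $\ell>\ell_0$ with $\ell_0$ at your disposal, the contradiction sequence may always be taken with $\ell_j\to\infty$).
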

\begin{proof}
Let $\phi$ be the minimizer of $\langle L_0 u,u\rangle_\ell$ over $H^1(\R)$ subject to $\|u\|_{L^2_\ell}=1$ and the full-line
orthogonality $\langle u,\psi_0\rangle_{L^2(\R)}=0$. By scaling, the minima is attained with  $\|\phi\|_{L^2_\ell}=\frac12$ and satisfies
$$ L_0 \phi = \lambda \phi, \hspace{0.5in} \textrm{on}\, [-\ell, \ell],$$
subject to Neumann boundary conditions $\phi_x(\pm \ell)=0$, in addition to the full line orthogonality condition.  The operator
$L_0$ on the truncated domain has eigenvalues $\lambda_0^\ell<\lambda_1^\ell<...$ which are $O(e^{-d\ell})$ far away
from the eigenvalues of $L_0$ on the full line. In particular $\lambda_0^\ell$ may be negative, but the rest are uniformly positive. 
In $L^2_\ell$ we partition $\phi=\beta \psi_0^\ell + \phi^\perp$, where $\psi_0^\ell$ is the $L^2_\ell$ ground state of $L_0$
and $\phi^\perp\in L^2_\ell$ is $L^2_\ell$ orthogonal to $\psi_0^\ell.$  Then we have
\beq
\label{a:Eq1}
\langle L_0 \phi,\phi \rangle_\ell\geq \lambda_0 \beta^2+ \lambda_1^\ell \|\phi^\perp\|_{L^2_\ell}^2.
\eeq
On the other hand the orthogonality condition implies that
$$  \langle \phi, g' \rangle_\R =0 = \beta + \langle \phi,g'\rangle_{\ell^c}.$$
where the subscript $\ell^c$ denotes integration over $\R\backslash [-\ell,\ell]$ with the corresponding norms.
In particular we deduce that
$$ |\beta|\leq \|\phi\|_{L^2_{\ell^c}} \|g' \|_{L^2_{\ell^c}}\leq \|g' \|_{L^2_{\ell^c}}\|\phi\|_{L^2_\ell}.$$
Since $g'$ decays exponentially at $\pm\infty$,  is complementary norm is exponentially small in $\ell.$
From orthogonality of $\psi_0^\ell$ and $\phi^\perp$ we have
$$ \|\phi\|_{L^2_\ell}^2 = \beta^2 + \|\phi^\perp\|_{L^2_\ell}^2\leq  \|\phi^\perp\|_{L^2_\ell}^2+\|g' \|_{L^2_{\ell^c}}^2\|\phi\|_{L^2_\ell}^2 .$$
or equivalently
$$ 1= \|\phi\|_{L^2_\ell}^2 \leq \frac{1}{1-\|g' \|_{L^2_{\ell^c}}^2}  \|\phi^\perp\|_{L^2_\ell}^2,$$
and taking $\ell$ large enough we use these bound in (\ref{a:Eq1}) to show that $\alpha$ is exponentially close to $\lambda_1^\ell>0.$
\end{proof}

To complete the proof of Lemma\,\ref{l:BE-coercivity} we take $\ell$ sufficiently large to
apply Lemma\,\ref{l:A1} and then take $\eps$ sufficiently small that $\eps |z|\leq \eps \ell\ll1$. 
Under these conditions $L^2_\ell$ and $L^2_R(-\ell, \ell)$ are equivalent norms, uniformly in $\eps$, and we have uniform 
$L^2_R(-\ell, \ell)$ coercivity of $L$.
Conversely, $L$ is clearly $L^2_R$ coercive on $[-R/\eps, \infty)\backslash[-\ell,\ell]$ since $f'(g)$ is strictly positive there.
Clearly $L$ is uniformly coercive on function with more  than half their $L^2_R$ mass in $[-R/\eps, \infty)\backslash[-\ell,\ell]$. 
The $g'$ orthogonality condition implies approximate  orthogonality to $\psi_0$ for $\ell$ large.  From these we 
deduce the full $L^2_R$-coercivity of $L$ over $X$.

\section*{Acknowledgements}
KP recognizes support from the NSF DMS under award 1813203. BW acknowledges support from an NSERC Canada grant. XC is supported by a UBC International Doctoral Fellowship. 

\bibliographystyle{abbrv}
\bibliography{PT_Bib}

\end{document}